    \newcommand{\stkout}[1]{\ifmmode\text{\sout{\ensuremath{#1}}}\else\sout{#1}\fi}
    \def\id{{\fontsize{.85em}{1.1em}\selectfont1}\normalfont\kern-.8ex1}
    \numberwithin{equation}{section}
    \newcommand*\diff{\mathop{}\!\mathrm{d}}
    \newcommand\tenq[2][1]{%
    \def\useanchorwidth{T}%
    \ifnum#1>1%
    \stackunder[0pt]{\tenq[\numexpr#1-1\relax]{#2}}{\scriptscriptstyle\widetilde}%
    \else%
    \stackunder[1pt]{#2}{\scriptscriptstyle\widetilde}%
    \fi%
    }
    \newcommand{\strikeout}[1]{%
    \ifmmode%
          \tikz[inner sep=0.5pt,baseline] \node [strike out,draw=OrangeRed,anchor=text]{$#1$};%
    \else%
          \tikz[inner sep=0.5pt,baseline] \node [strike out,draw=OrangeRed,anchor=text]{#1};%
    \fi%
    }
    \newtheorem{theorem}{Theorem}[section]
    \newtheorem{proposition}[theorem]{Proposition}
    \newtheorem{corollary}[theorem]{Corollary}
    \newtheorem{lemma}[theorem]{Lemma}
    \newtheorem{remark}[theorem]{Remark}
    \newtheorem{definition}[theorem]{Definition}
    \newcommand{\be}{\begin{equation}}
    \newcommand{\ee}{\end{equation}}
    \newcommand{\bp}{\begin{proof}}
    \newcommand{\ep}{\end{proof}}
    \newcommand{\bel}{\begin{equation}\label}
    \newcommand{\eeq}{\end{equation}}
    \newcommand{\bea}{\begin{eqnarray}}
    \newcommand{\eea}{\end{eqnarray}}
    \newcommand{\bee}{\begin{eqnarray*}}
    \newcommand{\eee}{\end{eqnarray*}}
    \newcommand{\ben}{\begin{enumerate}}
    \newcommand{\een}{\end{enumerate}}
    \newcommand{\p}{\partial}
    \newcommand{\R}{{\mathbb R}}
    \newcommand{\C}{{\mathbb C}}
    \newcommand{\sech}{\operatorname{sech}}
    \title[]{On the nonexistence of NLS breathers}
    \author[Miguel \'A. Alejo]{Miguel \'A. Alejo}
    \address{Departamento de Matem\'aticas. Universidad de C\'ordoba\\
    C\'ordoba, Spain.}
    \email{malejo@uco.es}
    \author[A.J. Corcho]{ Ad\'an J. Corcho}
    \address{Departamento de Matem\'aticas. Universidad de C\'ordoba\\
    C\'ordoba, Spain.}
    \email{a.corcho@uco.es}
    \thanks{M.\'A. Alejo  has been partially supported by Grant PID2022-137228OB-I00 funded by the Spanish Ministerio de Ciencia, Innovaci\'on y Universidades, MICIU/AEI/10.13039/501100011033.}
    \subjclass{Primary 35Q55, 35Q60; Secondary 35B65}
    \keywords{Nonlinear Schr\"odinger, nonexistence, breathers, virial}
    \date{\today}
\begin{document}

    \maketitle \markboth{On the nonexistence of NLS breathers} {M.\'A. Alejo  and A.J. Corcho}

    \setcounter{page}{1}

    \begin{quote}
    \textbf{Abstract.}
    {\small In this work, a rigorous proof of the nonexistence of breather solutions for NLS equations is presented. By using suitable virial functionals, we are able to characterize the nonexistence of breather solutions, different from standing waves, by only using their inner energy and the power of the corresponding nonlinearity of the equation. We extend this result for several NLS models with different power nonlinearities and even the derivative and logarithmic NLS equations.}
    \end{quote}
    \tableofcontents

    \section{Introduction}\label{Sec1}
    In this work we consider the nonlinear Schr\"odinger equation (NLS),

    \be\label{nNLS}
    iu_t +\Delta u=\varepsilon|u|^pu,\quad (t,x)\in \R\times\R^n,
    \ee

    \medskip 
    \noindent 
     where $u= u(t,x)$ is a complex-valued function, $\varepsilon=\pm1$ and $0<p <p_n^*$ with $p_n^{*}$ denoting the energy critical power, defined by
     \be\label{Pcritico}
     p_n^*=
     \begin{cases}
     \;\,\infty,& n=1,2,\medskip\\
     \frac{4}{n-2},&n\ge3.
     \end{cases}
     \ee
     The cases $\varepsilon =1$ and $\varepsilon=-1$ are called \emph{defocusing} (repulsive nonlinearity) and \emph{focusing} (attractive nonlinearity), respectively. 
     For this model, we will consider two different boundary value conditions (BC) at infinity (i.e. \emph{backgrounds})

    \begin{enumerate}
     \item[(1.1a)] Zero BC: $|u(t,x)|\rightarrow0\quad\text{when}\quad x\rightarrow\pm\infty,$ 
     \item[(1.1b)] Nonzero BC, as a \emph{Stokes wave}: for all $t\in\R,$ 
     \[
     |u(t,x) - e^{-i\varepsilon t}|\rightarrow0\quad\text{when}\quad x\rightarrow\pm\infty.
   \]
    \end{enumerate} 

    \medskip
    Low order conserved quantities for \eqref{nNLS} are, the \emph{mass}:
    \be\label{mass}
    M[u](t):=\|u(t,\cdot)\|^2_{L^2}=M[u](0),
    \ee
    the \emph{energy}:
    \be\label{energy}
    E[u](t):=\|\nabla_xu(t,\cdot)\|^2_{L^2}+\frac{2\varepsilon}{p+2}\|u(t,\cdot)\|^{p+2}_{L^{p+2}}=E[u](0),
    \ee
    and the \emph{momentum}:
    \be\label{momentum}
    P[u](t):=\text{Im}\int_{\R^n}\nabla_xu(t,\cdot)\bar{u}(t,x)dx=P[u](0).
    \ee 

    \medskip 
    The initial value problem (IVP)  for \eqref{nNLS} is, in the mass-subcritical case ($0<p<4/n$), local and globally well posed for initial data in $L^2(\R^n)$. In the mass-critical case ($p=4/n$) local well posedness can also hold for data in $L^2(\R^n)$. In the super-critical case ($p>4/n$), sharp results were obtained for small data in $H^s(\R^n),~~s\leq s_c=\frac{n}{2}-\frac{2}{p}$ \cite{Caze}, \cite{CazeWe}, \cite{CazeWe2},\cite{Dod1}. See \cite{LP} for further reading and a comprehensive description of the initial value problem for \eqref{nNLS}.

    \medskip
    From the physical point of view, this equation appears as a model in several contexts (see for instance: \cite{GV1},\cite{GV2},\cite{SCMc},\cite{ZS}). Assuming $\varepsilon =-1$ (focusing case) and with respect to explicit solutions, beside phases $e^{it},$ the next simplest ones are the so called standing wave solutions. Specifically, standing waves  of \eqref{nNLS}  with $\varepsilon =-1$ are written as
    \[
    u_{\omega}(t,x)=e^{i\omega  t}Q_{\omega}(x),\quad \omega \in \R.\]
    \noindent
    In this case, $Q_{\omega}$  satisfies the nonlinear elliptic equation  
    \be\label{nNLS-elliptic-eq-omega}
    -\Delta Q_{\omega} + \omega Q_{\omega} -|Q_{\omega}|^pQ_{\omega} =0,\quad x\in \R^n,
    \ee
    which has solution for $\omega >0$. Furthermore, $u_{\omega}$ can be seen as the generalization (with $\lambda = \sqrt{\omega}$) of the solution $U(t,x):=e^{it}Q(x)$ with 
    $Q$ satisfying the equation \eqref{nNLS-elliptic-eq-omega} with $\omega=1$, that is, 
    \be\label{nNLS-elliptic-eq-GS}
    -\Delta Q + Q -|Q|^pQ =0,\quad x\in \R^n.
    \ee
    The unique (up to symmetries) positive radial $H^1$-solution $Q$ of \eqref{nNLS-elliptic-eq-GS}  and exponentially decaying to zero is called  \emph{ground state}. Note that in the one dimensional case,  is explicitly written as 
   
    \[
    Q(x)=\Big[\frac{p+2}{2}\sech^2\big(\tfrac{p}{2} x\big)\Big]^{1/p}
    \]
    and then 
    \be\label{nNLS-Standing-Wave-1D}
    u_{\omega}(t,x)=e^{i\omega  t}Q_{\omega}(x)=e^{i\omega  t}\omega^{1/p}\Big[\frac{p+2}{2}\sech^2\big(\tfrac{p}{2} \sqrt{\omega}x\big)\Big]^{1/p}, \quad \omega>0.
    \ee
    Furthermore, we have that the energy  \eqref{energy} at $U=e^{it}Q$ is
    
    $$E[U]= \frac{np-4}{2(p+2)}\|Q\|_{p+2}^{p+2},$$ and consequently
    \be\label{energy-standing-waves}
    \begin{split}
    	& E[U] < 0 \quad \text{if}\quad 0< p < 4/n,\\
    	& E[U] = 0 \quad \text{if}\quad p=4/n,\\
    	& E[U] > 0 \quad \text{if}\quad 4/n < p < p_n^*.
    \end{split}
    \ee 
     
    \medskip 
    In this work we are interested to characterize nonexistence features of another type of special global solutions of \eqref{nNLS} with a more complex analytical structure, and currently known as \emph{breathers}. Qualitatively these breather solutions are \emph{spatially localized and time periodic solutions} and they are different from standing waves \eqref{nNLS-Standing-Wave-1D}. See Definition \ref{defbreather} for a more precise description of these wave packet like solutions.

    \medskip
    Breather solutions appear in modeling the propagation of pulses in nonlinear media as for gravity waves in the ocean. It is currently believed that they describe rogue or extreme waves in ocean, as well as they appear in the dynamics of optical and matter waves in nonlinear dispersive physical systems. See \cite{Jia},\cite{Dau},\cite{Kip} for further reading. 
    
    \medskip  
     In the context of \eqref{nNLS} with $p=2,~\varepsilon=-1$, i.e. the corresponding \emph{focusing} cubic-NLS equation:
     \be\label{3NLS}
     iu_t + u_{xx} =-|u|^2u,
     \ee
    explicit breather solutions for  \eqref{3NLS} with zero and non-zero BC were found by \cite{SY},\cite{Per},\cite{Kuz},\cite{Ma}. Moreover these cubic NLS breathers were characterized variationally and they satisfy a fourth order nonlinear ODE (see \cite{AFM}).
    To complement this result, we present  mass  \eqref{mass}, energy \eqref{energy} and momentum \eqref{momentum} for some well-known one-dimensional breathers for \eqref{3NLS}. Firstly,  the original Satsuma-Yajima breather solution \cite{SY}-\cite{AFM} (a one dimensional, zero background cubic NLS breather)

    \be\label{BSY}
    u_{SY}(t,x):=\frac{4\sqrt2(\cosh(3x)+3e^{8it}\cosh(x))}{\cosh(4x)+4\cosh(2x)+3\cos(8t)},
    \ee
    \noindent
    with 
    \[
    M[u_{SY}]=16,\quad E[u_{SY}]=-\frac{112}{3},\quad\text{and}\quad P[u_{SY}]=0.
    \]

    \medskip
    Moreover, and for the sake of completeness, in the case of non-zero background solutions, if we have in mind that mass,  energy and momentum are now re-defined as
    
    \[
    M_{nz}[u](t):=\int_{\R}(|u|^2-1)=M_{nz}[u](0),
    \]
   \[
    E_{nz}[u](t):=\int_{\R}(|u_x|^2-\frac12(|u|^2-1)^2)=E_{nz}[u](0),
    \]
    
    \noindent
    and
   
     \[
    P_{nz}[u](t):=\text{Im}\int_{\R}(\bar{u}-e^{-it})u_x=P_{nz}[u](0),
    \]
    it is easy to see that the Peregrine breather\footnote{Note that although \eqref{BP} is not periodic in time, it is a  limiting case of \eqref{BKM} as $a \rightarrow \frac12^{+}$. Moreover, it satisfies a  fourth order ODE like any other breather solution (see \cite{AFM}), and this is the main reason to name it as breather beside its algebraic character.} \cite{Per}

    \be\label{BP}
    u_{P}(t,x):=e^{it}\left(1-\frac{4(1+2it)}{1+4t^2+2x^2}\right),
    \ee
    \noindent
    and the Kuznetsov-Ma \cite{Kuz}-\cite{Ma} breather
    \be\label{BKM}
    \begin{aligned}
    	&u_{KM}(t,x):=e^{it}\left(1-\sqrt2\beta\frac{\beta^2\cos(\alpha t)+i\alpha\sin(\alpha t)}
    	{\alpha\cosh(\beta x)-\sqrt2\beta\cos(\alpha t)}\right),\\
    	&\alpha:=\sqrt{8a(2a-1)},~~\beta:=\sqrt{2(2a-1)},~~a>\frac{1}{2},
    \end{aligned}
    \ee
    
    \medskip
    \noindent
    satisfy
    \[\begin{aligned}
    	& M_{nz}[u_{P}]=0,\quad E_{nz}[u_{P}]=0,\quad\text{and}\quad P_{nz}[u_{P}]=0,\\
    	& M_{nz}[u_{KM}]=4\beta,\quad E_{nz}[u_{KM}]=-\frac{8}{3}\beta^3,\quad\text{and}\quad P_{nz}[u_{KM}]=0.\\
    \end{aligned}\]   
     For more details about these cubic NLS breathers, see \cite{Akh}, \cite{AFM2}, \cite{Kip},  \cite{Kip2}, \cite{Tsu},\cite{GV1}.
      
    \begin{remark}    
    Note that there are other families of breather solutions for \eqref{3NLS}, which are not in the scope of this work. In particular, the Akhmediev breather (spatially periodic, time localized), corresponding to the case $0<a<\frac{1}{2},$ which is defined as

   \[
    \begin{aligned}
    	&u_{A}(t,x):=e^{it}\left(1+\frac{\alpha^2\cosh(\beta t)+i\beta\sinh(\beta t)}
    	{\sqrt{2a}\cos(\alpha x)-\cosh(\beta t)}\right),\\
    	&\beta:=\sqrt{8a(1-2a)},~~\alpha:=\sqrt{2(1-2a)}.
    \end{aligned}
    \]
    \end{remark}

    \begin{remark}
    It is important to highlight that there  are some canonical and non canonical dispersive equations bearing breather solutions in one spatial dimension. These models include some particular powers in the generalized Korteweg-de Vries (gKdV) equation 
    \be\label{gkdv}
    u_t+\partial_x(u_{xx} + u^p)=0
    \ee
    and the sine-Gordon equation
    \be\label{sg}
    \phi_{tt} - \phi_{xx}=-\sin(\phi).
    \ee
    \end{remark}

    \medskip
As we said above, characteristics inherent to the breather concept that will be used in this work are its {\em periodicity in time} and its {\em spatial localization}. In fact, this object can not exist in the defocusing case, as it will be shown in the corresponding case of NLS \eqref{nNLS} (see Th. \ref{Teo1}).

    In the focusing case of NLS model, to distinguish these breather solutions from  ground states \eqref{nNLS-Standing-Wave-1D}, we present a more precise definition of breather for the general case, which will be used through this work. For   $\omega >0$ 
    and being $u$  a solution of \eqref{nNLS}, we first rewrite $u$ in the following way:
   
    \be\label{def-breather-decomp}
    u(t,x)=e^{i\omega t}\big( Q_{\omega}(x) + z(t,x)\big),
    \ee
    with $Q_{\omega}$ satisfying \eqref{nNLS-elliptic-eq-omega}.
    \noindent
    Therefore, $z(t,x)$ is a solution of the corresponding perturbed equation 
   
    \be\label{def-breather-perturbequat}
    iz_t -\omega z + \Delta z =-|Q_{\omega} + z|^p(Q_{\omega}+ z) + |Q_{\omega}|^pQ_{\omega}.
    \ee
   	
   \begin{definition}[{\it Breather}]\label{defbreather} We say that $u$ is a \emph{breather} solution of \eqref{nNLS}, if there exist  $\omega,C,\gamma >0$ and $\mu\in\R$ 
   such that  rewriting it as \footnote{Note that  $\mu=0$ corresponds to the zero background case to be analyzed in Section \ref{s2}.}
   \[
   u(t,x)=e^{i\omega t}\big( Q_{\omega}(x) + z(t,x)\big),
   \]
   \noindent
   we have that $z$ is a nontrivial ($z\neq 0$) solution of \eqref{def-breather-perturbequat} and the next properties hold:
   	
   \begin{enumerate}[(i)]
   	\item {\it periodicity in time:} for all  $(t, x)\in \R\times \R^n$, we have $u(t+2\pi/\omega,x)=u(t,x).$ 
   			
   	\medskip 
   	\item  {\it regularity in spatial variable:} for all $t\in\R$ we have
   	$z(t, \cdot )\in C^{\infty}(\R^n) \cap \big(\mu + H^1(\R^n)\big).$
      
   	\medskip 
   	\item {\it exponential localization:} $\sup\limits_{t\in \R}\big|D^{\alpha}_x \big(z(t, \cdot)-\mu\big)\big|\le Ce^{-\gamma|x|}$ for $|\alpha|\leq2.$ 
    \end{enumerate}
   	\end{definition}
   	
   \medskip 
   It is important to observe that  if $u(t,x)$ is a breather solution of \eqref{nNLS}, then
  
   \begin{align}
   	&u(t,x+x_0),~~\forall\, x_0\in\R,\qquad{\text (translation)}\label{traslation-nNLS}\\
   	&e^{i\theta}u(t,x),~~\forall\, \theta\in\R,~~~~\qquad\qquad{\text (phase)}\\
   	&\lambda^{2/p}u(\lambda^2 t,\lambda x),~~~\forall\, \lambda>0,\qquad{\text (scaling)}\label{scaling-nNLS}
   \end{align}
   are also breather solutions. In \eqref{scaling-nNLS} notice that the rescaled breather has main period $\frac{2\pi}{\omega \lambda^2}$.
  
   \begin{remark}[\textit{Precluding standing waves}]
 	Note that with Definition \ref{defbreather} we still keep  main essential features of breathers, i.e. they still are (i) periodic in time  and (ii) spatially localized solutions but, and equally important, precluding standing waves \eqref{nNLS-Standing-Wave-1D} as breather solutions for \eqref{nNLS} with $\varepsilon =-1$.  We will show in this work that the relation between the sign of energy and the  power $p$ of the nonlinearity given in \eqref{energy-standing-waves} are necessary conditions for the existence of breathers in their respective cases. 
    \end{remark}

    \medskip
    It is currently known that the existence of breather solutions in nonlinear PDEs is a unusual property, coming from the balance between 
    dispersion and nonlinearity of the equation. Previous results about the nonexistence of breather solutions for the gKdV equation \eqref{gkdv} go back to \cite{MP}, where authors presented a complete and rigorous description of different nonlinearities which allow breather solutions and other cases which preclude their existence, as for the classical KdV equation ($p=2$ in \eqref{gkdv}). For a deeper study on existence and nonexistence of  (quasimonochromatic) breathers for gKdV in higher dimensions see \cite{FFMP}. With respect to sine-Gordon equation \eqref{sg} and scalar field equations (like Klein-Gordon and $\phi^6$ equations), some results about existence and nonexistence of breathers can be found in \cite{BMW}, \cite{Denz}, \cite{KMM1},  \cite{Kich}, \cite{Man}, \cite{Vui}.  For NLS models and as far as we know, the only nonexistence result for (small) breathers was presented in \cite{ME} dealing with cubic NLS \eqref{3NLS}.

    \medskip
    Our main aim is to understand how much we can extent the existence (or nonexistence) of breather solutions, which are not standing waves, of the one dimensional cubic NLS \eqref{3NLS} to a more general model as \eqref{nNLS}, in order to draw a road-map to guide us in the search of these special localized profiles. Our inspiration came from a previous approach for KdV type equations in \cite{MP}. In particular, we are interested to describe the role of the nonlinearity and dimension in the existence/nonexistence of NLS breathers in two main cases: with zero ($\mu=0$) and with  nonzero ($\mu\neq0$) background.

    \medskip
    Our nonexistence results for NLS breathers are based on the use of virial type functionals which initially appear in the study of blow-up and singular solutions (which obviously can not be breather solutions) in evolution equations. We show that these virials can be properly used in other settings (not necessarily singularity formation), in particular to preclude existence of breather solutions.

    \medskip
    Finally note that the current work may be useful for advancing to a  better understanding of the Soliton Resolution Conjecture for the NLS \eqref{nNLS} and for related nonlinear dispersive PDEs, once we rigorously justify in which physical regimes of several NLS models no breather-type solutions can be found.
    
    
   \subsection*{Acknowledgments} Authors   wish to thank Claudio Mu\~noz and many members of the DIM-Univ. Chile for enlightening and constructive discussions about
 this work. We also thank Guillaume Ferriere (INRIA) for his helpful comments, and finally, we are indebted to the referees for their comments and suggestions that actually improved a previous version.

\medskip
{\bf Data Availability Statements:} the data included in this manuscript can be found at arXiv:2408.09862.

\medskip
{\bf Conflict of interest:} all authors declare that they have no conflicts of interest. 

    \medskip 
    \section{Breathers with zero background}\label{s2}
   
     We firstly remember that in the \emph{defocusing} case ($\varepsilon=1$) in \eqref{nNLS}, one-dimensional localized standing wave solutions are not allowed. In fact in this regime standing waves are kink-like solutions, i.e. nonlocalized profiles. Moreover, we prove that even in that case, breather solutions can not exist for \eqref{nNLS} in any dimension. On the other hand, in the  \emph{focusing} case  ($\varepsilon=-1$) in \eqref{nNLS}, we can not expect breather solutions $u$, for instance, when

     \be\label{superCritnNLS}
     E[u]<0\qquad\text{and}\qquad 4/n<p < p_n^*,
     \ee
     
     \noindent 
     with $p_n^{*}$ as in \eqref{Pcritico}. In fact, for initial data satisfying \eqref{superCritnNLS}, it is known the existence of blow-up in finite time, i.e. there exists a positive time $T^{*}<\infty$ such that 
     $$\lim\limits _{t\to T^*}\|\nabla_x u(t,\cdot)\|_{L^2}=\infty.$$
     See \cite[Th. 6.3, p.134]{LP} for further reading.

    \medskip
    The starting point to obtain blow-up results for \eqref{nNLS} in the focusing case ($\varepsilon=-1$) comes from \cite{Glass} based on previous ideas from \cite{ZS}. The main argument rests in the obtention of a suitable {\it virial function,} which allows to study its detailed time evolution. Namely,  the virial function\footnote{We denote$\, 
    	\langle x,\, y\rangle =\sum\limits_{j=1}^{n} \bar{x}_jy_j$, with $x, y\in \C^n$.} (a kind of \emph{weighted momentum})

    \be\label{virial}
    \widetilde{P}[u](t):=\text{Im}\int_{\R^n}\langle x,\bar{u}\nabla_x u\rangle dx,
    \ee

    \medskip
    \noindent
    having that (see e.g. \cite{LP})

    \be\label{virial2}
    \widetilde{P}'[u](t)=2\|\nabla_xu(t,\cdot)\|_{L^2}^2 +\varepsilon \frac{np}{p+2}\|u(t,\cdot)\|^{p+2}_{L^{p+2}}.
    \ee
    Note that the weighted momentum \eqref{virial} is well-defined for $H^1(\R^n)\cap L^2(\R^n,|x|^2dx)$-solutions (see \cite[Prop. 6.1, p.131]{LP}) and breather solutions belong to this functional space\footnote{Note that the same reason applies to other models considered in this work}.

    \medskip
    We note that in fact,  \eqref{virial2} is depending on the energy \eqref{energy} in the following way:

    \be\label{dtvirial}
    \widetilde{P}'[u](t)=2 E[u]  -\varepsilon\frac{4-np}{p+2}\|u(t,\cdot)\|^{p+2}_{L^{p+2}}.
    \ee
    \noindent
    \medskip
    In the forthcoming lines, we will analyze the time evolution of \eqref{virial}, i.e. \eqref{virial2} or \eqref{dtvirial}, to state non-existence results for breathers (Definition \ref{defbreather}) of NLS \eqref{nNLS}. We will consider two types of signs in the nonlinearity, modelling attractive and repulsive potentials. 

    \medskip
    \begin{theorem}[Nonexistence for defocusing  NLS breathers]\label{Teo1}
     Let $\varepsilon=1$. Then breather solutions can not exist in \eqref{nNLS}.
    \end{theorem}
    
    \begin{proof}
    Note that with  $\varepsilon=1,$ from \eqref{virial2}  we have that $\widetilde{P}'[u](t)>0,~~\forall t\in\R;$ implying that $\widetilde{P}[u]$ is monotonic. Therefore, the existence of breathers is not possible in this case.
    \end{proof}

    \subsection{Nonexistence of breathers for the focusing NLS equation \eqref{nNLS}.}

    As pointed out in Remark 1.5 of \cite{ME}, NLS breather solutions (one dimensional) may exist only if their momentum $P[\cdot]$ vanishes. In fact, the same fact holds in any dimension, from the classical relation
    \[
    \int_{\R^n}  x_j|u|^2dx=\int_{\R^n}  x_j|u_0|^2dx + 2\left(\int_{\R^n}\bar{u}_0\partial_{x_j}u_0dx\right)t,\quad t\in \R\; (1\leq j\leq n). 
    \]
    See for instance,  \cite[(2.7)]{Killip-Visan}. So, null momentum is a necessary condition for the existence of breathers for \eqref{nNLS}.
    
    \medskip
    In the next result, we present other nonexistence conditions for NLS breathers in the case of $P[u] = 0$.

    \begin{theorem}[Nonexistence for focusing NLS breathers]\label{Teo2b}
    Let $\varepsilon=-1$  and $u$ a solution  of \eqref{nNLS} with $P[u]=0$.  Then, $u$ cannot be a breather  in any of the following regimes:

    \medskip 
    	\begin{enumerate}
    	\item $E[u]>0$\, and\, $p\leq 4/n.$
    	\medskip
    	\item $E[u]=0$\, and\, $p\neq 4/n.$
    	\medskip
    	\item $E[u]<0$\, and\, $4/n\le  p < p_n^*.$
       
    	\end{enumerate}
    \end{theorem}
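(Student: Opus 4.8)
The plan is to mirror the strategy of Theorem~\ref{Teo1}: for a breather solution the virial function $\widetilde{P}$ from \eqref{virial} is periodic in time, hence cannot be strictly monotonic, so it suffices to exhibit a fixed strict sign for $\widetilde{P}'(t)$ throughout each of the three regimes. The working identity is the energy form \eqref{dtvirial}, which for $\varepsilon=-1$ reads
\be\label{virial-focusing}
\widetilde{P}'(t)=2E[u]+\frac{4-np}{p+2}\|u(t,\cdot)\|^{p+2}_{L^{p+2}}.
\ee
Since every nontrivial breather satisfies $u(t,\cdot)\not\equiv 0$ for each $t$ (otherwise uniqueness for \eqref{nNLS} would force $u\equiv 0$), the term $\|u(t,\cdot)\|^{p+2}_{L^{p+2}}$ is strictly positive for all $t$.

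First I would record the two preliminary facts that make the argument legitimate. The integral defining $\widetilde{P}$ converges and $\widetilde{P}$ is $C^1$ in $t$ because, in the zero-background case ($\mu=0$), Definition~\ref{defbreather}(ii)--(iii) supplies smoothness together with exponential decay of $u=e^{i\omega t}(Q_\omega+z)$ and its first derivatives, so that $\langle x,\bar u\,\nabla u\rangle\in L^1(\R^n)$ uniformly in $t$ and differentiation under the integral sign is justified. Periodicity of $\widetilde{P}$, with period $2\pi/\omega$, is inherited directly from the time-periodicity of $u$ (Definition~\ref{defbreather}(i)), since $\widetilde{P}(t)$ depends on $u$ only through $u(t,\cdot)$.

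The sign analysis then splits exactly along the three regimes and is elementary. In case (1), $p\le 4/n$ gives $4-np\ge 0$, so both terms on the right of \eqref{virial-focusing} are nonnegative and the first is strictly positive because $E>0$; hence $\widetilde{P}'(t)>0$ for all $t$. In case (3), $p\ge 4/n$ gives $4-np\le 0$, so both terms are nonpositive and $E<0$ forces $\widetilde{P}'(t)<0$ for all $t$. In case (2), $E=0$ collapses \eqref{virial-focusing} to $\widetilde{P}'(t)=\tfrac{4-np}{p+2}\|u(t,\cdot)\|^{p+2}_{L^{p+2}}$, whose sign is that of $4-np\neq 0$ (using $p\neq 4/n$) and which never vanishes by the strict positivity noted above. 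In each regime $\widetilde{P}$ is therefore strictly monotonic, contradicting its periodicity, so no breather can exist.

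The only genuinely delicate point is the justification of the preliminary facts, i.e.\ that \eqref{dtvirial} actually holds in the breather class: one must verify that the formal virial computation leading to \eqref{virial2}/\eqref{dtvirial} is valid here, which hinges on the uniform exponential localization in Definition~\ref{defbreather}(iii) to kill the boundary terms arising in the integration by parts. The sign book-keeping and the monotonicity-versus-periodicity contradiction are then immediate.
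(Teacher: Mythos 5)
Your proposal is correct and follows essentially the same route as the paper: it uses the identity \eqref{dtvirial} to show that $\widetilde{P}'(t)$ has a fixed strict sign in each of the three regimes, contradicting the periodicity of $\widetilde{P}$ that a breather would force. The extra care you take with integrability, differentiation under the integral, and the strict positivity of $\|u(t,\cdot)\|_{L^{p+2}}$ only makes explicit what the paper leaves implicit.
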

   
    \begin{proof}
    From \eqref{virial} and \eqref{dtvirial} we obtain the regimes \textit{(1)-(2)-(3)} where $\widetilde{P}'[u](t)$ \eqref{dtvirial} has constant sign, which contradicts the periodicity of \eqref{virial}.
    \end{proof}

   \smallskip 
   \begin{remark} 
	We observe the following:
	\begin{itemize}
		\item Theorem \ref{Teo2b} rules out the existence of any localized 
		standing waves.
		
		\medskip
		\item We consider here $p < p_n^*$ because in this case there is a well-known well-posedness theory  in $H^1(\R^n)\cap L^2(\R^n,|x|^2dx)$. However, when $p \ge  p_n^*$, the proof of the Theorem \ref{Teo2b}  still applies for solutions in any subspace of $H^1$ where functionals \eqref{virial} and \eqref{virial2} are  well-defined.
	\end{itemize}
\end{remark} 

    In view of the statements in Theorem \ref{Teo2b} we conclude the next \emph{necessary conditions} for the existence of breather solutions for \eqref{nNLS}:

    \begin{corollary}\label{Corollary-NC-Exist-nNLS}
    Let $\varepsilon=-1$. Breather solutions  $u$  of \eqref{nNLS} can only exist when  $P[u]=0$ and in any of the following regimes:
     
     \medskip 
     \begin{enumerate}
     \item $E[u]<0$\, and\, $p<4/n.$
     
     \medskip 
     \item $E[u]=0$\, and\, $p=4/n.$
     
     \medskip 
     \item $E[u]>0$\, and\, $4/n< p< p_n^*.$
     \end{enumerate}
    \end{corollary}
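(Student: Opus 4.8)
The plan is to obtain this corollary as the logical complement of the exclusions in Theorem \ref{Teo2b}, together with the necessary vanishing of the momentum. First I would record that $P[u]=0$ is forced: by the identity \eqref{NC-NullMomemntum}, each quantity $\int_{\R^n} x_j|u|^2\,dx$ is an affine function of $t$ with slope $2\operatorname{Im}\int_{\R^n}\bar u_0\,\partial_{x_j}u_0\,dx$. Since a breather is $2\pi/\omega$-periodic in $t$ (Definition \ref{defbreather}(i)) and spatially localized, the left-hand side is bounded in $t$, so the slope must vanish for each $j$; hence $P[u]=0$. Under this standing hypothesis the remaining task lives entirely in the $(E,p)$-plane.

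Next I would partition the admissible region $E\in\R$, $0<p<p_n^*$ according to the sign of $E$ and the position of $p$ relative to the mass-critical exponent $4/n$, and intersect with the complement of the three slabs excluded by Theorem \ref{Teo2b}, namely $\{E>0,\ p\le 4/n\}$, $\{E=0,\ p\neq 4/n\}$ and $\{E<0,\ 4/n\le p<p_n^*\}$. For $E>0$ the first slab removes $p\le 4/n$, leaving $4/n<p<p_n^*$, which is regime (3). For $E=0$ the second slab removes every $p\neq 4/n$, leaving $p=4/n$, which is regime (2). For $E<0$ the third slab removes $4/n\le p$, leaving $p<4/n$, which is regime (1). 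Thus the three regimes in the statement are exactly the parameter values not excluded by Theorem \ref{Teo2b}.

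There is no analytic obstacle here; the entire content is carried by Theorem \ref{Teo2b} and the momentum identity, and the corollary is a bookkeeping statement assembling their consequences. The only point deserving a moment's care is the treatment of the two boundary values $E=0$ and $p=4/n$: one must check that the three excluded slabs and the three admissible regimes together tile the full parameter region with neither overlap nor gap. A direct inspection confirms this, since the excluded slabs are closed precisely at those values of $p$ where the admissible regimes are open (and likewise at $E=0$ the excluded slab absorbs both $p<4/n$ and $p>4/n$), making the trichotomy exhaustive and the admissible regions pairwise disjoint.
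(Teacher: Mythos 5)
Your argument is correct and coincides with the paper's own reasoning: the corollary is stated as an immediate consequence of Theorem \ref{Teo2b} together with the necessity of $P[u]=0$, which the paper also derives from the affine-in-time identity \eqref{NC-NullMomemntum} and the boundedness forced by periodicity and localization. Your complement-of-the-excluded-slabs bookkeeping matches the intended deduction exactly.
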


    \medskip 
    \begin{remark}
    Note that the explicitly known Satsuma-Yajima breather solution \eqref{BSY}  for \eqref{nNLS}, is precisely in regime \text{(1)} of Corollary \ref{Corollary-NC-Exist-nNLS}. See \cite{AFM} for further details.
    \end{remark}
 
	\subsection{Mass subcritical case (one dimension)}
   Now we consider the focusing case of \eqref{nNLS} in  one dimension,  with  $2\le p < 4$. Using the ground state, we will show nonexistence of breathers for \eqref{nNLS} with small mass  and exponential decay slightly stronger than the ground state.  
  
  \begin{theorem}[Nonexistence of breathers for focusing  NLS with $2\le p<4$]\label{Teo2c}
   Let  $\varepsilon=-1$, $n=1$ and $2\le p< 4$. There exists $\epsilon >0$ such that for any initial data $u_0\in H^{\frac{5}{2}+}(\R)$ satisfying 
   \[
   \|u_0\|_{L^2} <\epsilon,
   \]
   the corresponding $H^{\frac{5}{2}+}(\R)$-solution of \eqref{nNLS} cannot be a breather with frequency  $\omega>0$ having exponential decay
    \be\label{Teo2c-02}
    \big|\p_x^j\big(u(t,x)-e^{i\omega t}Q_{\omega}(x)\big)\big| \lesssim e^{-\gamma|x|},\; \gamma>\sqrt{\omega}\,(j=0,1,2),
    \ee
    for all $t\ge 0$.
  \end{theorem}
\begin{proof}
We note that since $u(t,\cdot)\in H^{\frac{5}{2}+}(\R)$ for all $t\ge 0$, from Sobolev embedding we have that  $u(t,\cdot)\in C^2_{\infty}(\R)$, that is  $u(t,\cdot)\in C^2(\R)$ and $\lim\limits_{|x|\to  \infty}u(t,x)=0$. Furthermore, suppose the $u$ is a breather solution of \eqref{nNLS} with $\varepsilon =-1,~ n=1$ and time-period $2\pi/\omega$ for some $\omega >0$. Then by Definition \ref{defbreather}, 
 \be\label{proof-Teo2c-01}
 u(t,x)= e^{i\omega t}\big(Q_{\omega}(x) + z(t,x) \big),
 \ee
 and $z  \neq 0$ verifying the equation 
\[
  iz_t -\omega z + z_{xx} =-|Q_{\omega} + z|^p(Q_{\omega}+ z) + |Q_{\omega}|^pQ_{\omega}.
 \]
 and the properties in Definition \ref{defbreather}.  Now let $\eta(\cdot) \in C^{\infty}(\R)$ such that 
\[\eta(x) >0\quad \text{and}\quad \eta(x)\lesssim e^{\sqrt{\omega}|x|},\; \forall\, x\in \R.\]
\noindent
 Then, we define the periodic function 
\[
\psi(t) := \text{Im}\int_{\R}\eta(x) z(t,x)dx,
\]
\noindent
 which is actually well-defined because the exponential growth of $\eta$ equals the decay of the ground state $Q_\omega$ (i.e. rate $\sqrt\omega$ \eqref{nNLS-Standing-Wave-1D}) and $z$ in \eqref{Teo2c-02} decays faster than $Q_\omega$.
\medskip 
Using the hypothesis \eqref{Teo2c-02} and the properties of $\eta,$ we see that $\psi$ is well-defined as well as its derivative, given by 

\[
\begin{split}
\psi'(t) & =  \text{Im}\int_{\R}\eta(x)\big[-i\omega z +i z_{xx} + i|Q_{\omega}+z|^p(Q_{\omega}+z) -i|Q_{\omega}|^pQ_{\omega} \big]dx \\
&=-\int_{\R}\eta |Q_{\omega}|^pQ_{\omega} dx + \text{Re}\int_{\R}(\eta'' -\omega \eta)zdx + \text{Re}\int_{\R}|Q_{\omega}+z|^p(Q_{\omega}+z)\,\eta dx.
\end{split} 
\]
So, selecting now 
$$\eta(x)= \cosh(\sqrt{\omega}x),$$
we have 
\be\label{proof-Teo2c-05}
\psi'(t)=-\int_{\R}\cosh(\sqrt{\omega}x)Q_{\omega}^{p+1}dx + \text{Re}\int_{\R}|Q_{\omega}+z|^p(Q_{\omega}+z)\,\eta dx.
\ee

\medskip 
On the other hand from Corollary \ref{Corollary-NC-Exist-nNLS} we have that $E[u]<0$, then combining this fact with the Gagliardo-Nirenberg inequality  
and the mass conservation law, we get
\be\label{proof-Teo2c-06}
\|u_x(t, \cdot)\|_{L^2}^2  < \|u(t, \cdot)\|_{L^{p+2}}^{p+2} \lesssim_p \|u_x(t, \cdot)\|_{L^2}^{p/2} \|u_0\|_{L^2}^{2+p/2},
\ee
and therefore, 
\be\label{proof-Teo2c-07}
\|u_x(t, \cdot)\|_{L^2} \lesssim_p \|u_0\|_{L^2}^{\frac{4+p}{4-p}},\; \forall\, t\in \R.
\ee

\medskip 
Now, we can estimate the second integral term in \eqref{proof-Teo2c-05}. Using that 
$|u(t,\cdot)\eta|$ is uniformly bounded in time, H\"older's inequality with
$$
\frac{1}{p}=\frac{4}{p^2}\cdot  \frac{1}{2}+ \Big(1- \frac{4}{p^2}\Big)\frac{1}{p+2},\quad (2\le p < 4),
$$
and also the last inequality in \eqref{proof-Teo2c-06} combined with \eqref{proof-Teo2c-07}, one gets

\be\label{proof-Teo2c-08}
\begin{split}
\Big|\text{Re}\int_{\R}|Q_{\omega}+z|^p(Q_{\omega}+z)\,\eta dx\Big|&\lesssim \int_{\R}|u(t, \cdot)|^pdx\\
& \lesssim \|u(t, \cdot)\|_{L^2}^{4/p} \|u(t, \cdot)\|_{L^{p+2}}^{(p^2-4)/p}\\
&=\text{O}\left(\|u_0\|_{L^2}^{\frac{2p}{4-p}}\right).
\end{split}
\ee

\medskip 
Finally, combining \eqref{proof-Teo2c-05} and \eqref{proof-Teo2c-08}, for small enough $\|u_0\|_{L^2}$ we have that 
\[
\psi'(t)\le -\frac12\int_{\R}\cosh(\sqrt{\omega}x)Q_{\omega}^{p+1}dx <0,\; \forall\,\quad t\in \R,
\]
contradicting the periodicity of $\psi$.
\end{proof}

    \begin{table}[h!]
     	\centering
        \begin{tabular}{|c|c|c|c|c|c|}\hline
     	\multicolumn{6}{|c|}{\textbf{Nonexistence of breathers for mass sub-critical:} $\boldsymbol{0< p < 4/n}$} \\
     	\hline
     	 $\boldsymbol{\varepsilon}$ & $\boldsymbol{n}$ & $\boldsymbol{p}$ & $\boldsymbol{P[u]}$ & $\boldsymbol{E[u]}$ & $\boldsymbol{M[u]}$ \\
     	\hline
     	 \cellcolor{gray!25}1 &  & &  &  & \\
     	 \hline
     	 \cellcolor{gray!25} -1&  &  &\cellcolor{gray!25} $\neq 0$ &  &  \\
     	 \hline
     	 \cellcolor{gray!25} -1&  &  & \cellcolor{gray!25} 0 &\cellcolor{gray!25} $\ge 0$ &  \\
     	\hline
     	 \cellcolor{gray!25} -1& \cellcolor{gray!25} 1& \cellcolor{gray!25} $2\le p <4$& \cellcolor{gray!25} 0 & \cellcolor{gray!25} $<0$ 
     	 & \cellcolor{gray!25} Small (+ \emph{exponential decay faster than} $Q_{\omega}$)\\
     	\hline
     \end{tabular}
     \caption{\small Summary results in the mass sub-critical case. Each line represents a non-existence result with its respective conditions (for parameter or functional) marked in gray. The empty cells means that there are no restrictions for the respective parameter or functional.}
    \end{table}
   
   \subsection{Mass critical case}
    If $p=4/n$, from Corollary \ref{Corollary-NC-Exist-nNLS}, we know that breather solutions $u$ can only be found with $E[u]=0$. In this case, also notice that from \eqref{dtvirial}  
    it follows that $\widetilde{P}'[u](t)\equiv 0$ and consequently
   \[
    \widetilde{P}[u](t)=\widetilde{P}[u](0),\quad \forall \; t\in\R.
   \]
    On the other hand, the variance of solutions, defined by
    \be\label{variance-NLS}
    V(t):=\int_{\R^n}|x|^2|u(t, x)|^2dx,
    \ee
    satisfies 
    $V'(t)=4\widetilde{P}[u](t)$
    (see for instance (6.18) in \cite[Prop.~6.1, p.131]{LP}). Hence, by integration and using the conservation of the weighted momentum in this situation, one obtains the linear function 
    \be\label{var-NLS-Enull-masscritical}
    V(t):=V(0) + 4\widetilde{P}[u](0)\,t.
    \ee
    Using \eqref{var-NLS-Enull-masscritical} and denoting by $Q$ the ground state solution of  
    \[
    -\Delta Q+Q-|Q|^{4/n}Q=0,
    \]
    we can establish a  nonexistence result which removes some possibilities of breather existence in Corollary  \ref{Corollary-NC-Exist-nNLS}-{\it(2)}. In fact, it generalizes a small data result in \cite{ME}.

    \begin{proposition}\label{prop1}
    Let $\varepsilon=-1$ and $p=4/n$. If $u$ is a solution of \eqref{nNLS}  with $P[u]=0$ and $E[u]=0$, then $u$ cannot be a breather  in the following cases:

    \medskip 
    \begin{enumerate}
    \item $\widetilde{P}[u](0) \neq 0$
    \medskip 
    \item[]\qquad  or 
    \medskip 
    \item $\|u(t,\cdot)\|_{L^2} \equiv M[u]< \|Q\|_{L^2}$.
    \end{enumerate}
    \end{proposition}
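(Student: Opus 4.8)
The plan is to handle the two cases separately, both resting on the facts already recorded in the excerpt that in the mass-critical regime with $E[u]=0$ the virial $\widetilde P$ is constant and the variance is the affine function $V(t)=V(0)+4\widetilde P(0)\,t$ of \eqref{var-NLS-Enull-masscritical}. For case (1) I would argue purely by periodicity. If $u$ were a breather, then by Definition \ref{defbreather}(i) it is $2\pi/\omega$-periodic in $t$, and by the exponential localization (iii) with $\mu=0$ both $V(t)$ and $\widetilde P(t)$ are finite for every $t$; moreover $|u(t+2\pi/\omega,x)|^2=|u(t,x)|^2$ forces $V$ to be $2\pi/\omega$-periodic. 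But $V(t)=V(0)+4\widetilde P(0)\,t$ is a non-constant affine function whenever $\widetilde P(0)\neq 0$, which is incompatible with periodicity. This contradiction rules out breathers in case (1), independently of any mass hypothesis.

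For case (2) I would instead exploit the sharp Gagliardo--Nirenberg inequality. In the mass-critical case $p=4/n$ the energy reads $E[u]=\|\nabla u\|_{L^2}^2-\frac{n}{n+2}\|u\|_{L^{2+4/n}}^{2+4/n}$, and Weinstein's optimal constant, computed from the Pohozaev relations for \eqref{GState-NLS-masscritical} (which in passing also give $E[Q]=0$, consistent with \eqref{energy-standing-waves}), yields
\[
\frac{n}{n+2}\|u\|_{L^{2+4/n}}^{2+4/n}\le \Big(\frac{M[u]}{M[Q]}\Big)^{2/n}\|\nabla u\|_{L^2}^2,\qquad M[Q]=\|Q\|_{L^2}^2.
\]
Hence $E[u]\ge\big(1-(M[u]/M[Q])^{2/n}\big)\|\nabla u\|_{L^2}^2$. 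Under the sub-threshold hypothesis $\|u\|_{L^2}<\|Q\|_{L^2}$ the prefactor $\delta:=1-(M[u]/M[Q])^{2/n}$ is strictly positive, so the conserved identity $0=E[u]\ge \delta\,\|\nabla u(t,\cdot)\|_{L^2}^2$ forces $\nabla u(t,\cdot)\equiv 0$ for every $t$; combined with the decay at infinity this gives $u\equiv 0$, contradicting that a breather is a nontrivial solution. This case is independent of the value of $\widetilde P(0)$.

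The one genuinely substantive point is pinning down the sharp constant in the Gagliardo--Nirenberg inequality in terms of $\|Q\|_{L^2}$: this is Weinstein's theorem, and it is where the precise threshold $\|Q\|_{L^2}$ enters, the ground state being the optimiser. Once the sharp constant is in hand the conclusion $\nabla u\equiv 0$ is immediate from the strict positivity of $\delta$ together with $E[u]=0$, so no analysis of the equality case of the inequality is required; one only needs to observe that the resulting $u\equiv 0$ is excluded because a breather is by definition nontrivial. By comparison the periodicity argument of case (1) is soft, its sole delicate ingredient being the finiteness and time-periodicity of $V$, both of which follow directly from the exponential localization and periodicity in Definition \ref{defbreather}.
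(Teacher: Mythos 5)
Your proof is correct and takes essentially the same route as the paper: case (1) is the identical periodicity-versus-affine-variance argument based on \eqref{var-NLS-Enull-masscritical}, and case (2) is exactly Weinstein's sub-threshold mass argument, which the paper simply cites from \cite{Weins} (global solutions with $M[u]<M[Q]$ have $E[u]>0$, contradicting $E[u]=0$) while you unpack it explicitly via the sharp Gagliardo--Nirenberg inequality.
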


    \begin{proof}
    If $u(t, \cdot)$ is a breather solution of \eqref{nNLS}, then the variance  $V$  \eqref{variance-NLS} is a periodic function, but  from \eqref{var-NLS-Enull-masscritical} this is not possible in the case $\it{(1)}$. On the other hand, it is well known (see \cite{Weins}) that the corresponding solutions to \eqref{nNLS} with initial data verifying  $\it{(2)}$  are globally well defined in time and have positive energy $E[u]>0$. Hence, these global solutions cannot be breathers.
    \end{proof}
   
    \begin{remark}\label{rem2}
    The results in Proposition \ref{prop1} state that in the mass critical situation ($p=4/n$) it is only feasible that breather solutions $u$ exist in the following case:
    $$E[u]=0,\quad \widetilde{P}[u]\equiv 0,\quad P[u] \equiv 0\quad \text{and}\quad M[u]\ge \|Q\|_{L^2}.$$
    Indeed, $e^{it}Q(x)$ is a standing wave solution of \eqref{nNLS} verifying the above conditions. See \cite{Weins} for further reading. Another interesting observation, having in mind {\it(2)},  is that none of global solutions with subcritical mass of  \eqref{nNLS} is a breather solution.
    \end{remark}

\begin{table}[h!]
	\centering
	\begin{tabular}{|c|c|c|c|c|}\hline
		\multicolumn{5}{|c|}{\textbf{Nonexistence of breathers for mass critical:} $\boldsymbol{p=4/n}$} \\
		\hline
		$\boldsymbol{\varepsilon}$  & $\boldsymbol{P[u]}$ & $\boldsymbol{E[u]}$ & $\boldsymbol{M[u]}$ &  $\boldsymbol{\widetilde{P}[u](0)}$\\
		\hline
		\cellcolor{gray!25}  1&   &  &  &  \\
		\hline
		\cellcolor{gray!25} -1&  $\cellcolor{gray!25} \neq 0$ &  &  & \\
		\hline
		\cellcolor{gray!25} -1&  & \cellcolor{gray!25} $\neq 0$ &  & \\
		\hline
		\cellcolor{gray!25} -1& \cellcolor{gray!25} $= 0$ & \cellcolor{gray!25} $= 0$ & \cellcolor{gray!25} $< M[Q]$ & \\
		\hline
		\cellcolor{gray!25} -1&  \cellcolor{gray!25} $= 0$ &  \cellcolor{gray!25} $= 0$ &  & \cellcolor{gray!25} $\neq 0$ \\
		\hline
	\end{tabular}
	\caption{\small Summary results in the mass critical case. Each line represents a non-existence result with its respective conditions (for parameter or functional) marked in gray. The empty cells means that there are no restrictions for the respective parameter or functional.}
\end{table}

    \subsection{Mass super-critical case}
    Now, we are able to approach the mass super-critical situation ($4/n<p < p^*_n$), namely the next result removes some possibilities of breather existence in Corollary \ref{Corollary-NC-Exist-nNLS}-{\it(3)}.

    \medskip 
    Let $p_n^{*}$ be as in  \eqref{Pcritico}, $\frac{4}{n}<p<p_n^*$ and $s_c=\frac{n}{2}-\frac{2}{p}$. To simplify the notation we rename by $Q^*$ 
    instead $Q_{1-s_c}$ the ground state solution of

   \[
    -\Delta Q^* + (1-s_c)Q^*-|Q^*|^pQ^*=0.
    \]

    \begin{proposition}\label{prop2}
    Let $\varepsilon = -1$ and $\frac{4}{n}<p<p_n^{*}$, If $P[u]=0$, and $E[u]>0$, then breather solutions $u$ for \eqref{nNLS} can not exist whenever
    \be\label{prop2-a}
    E[u]^{s_c}M[u]^{1-s_c}<E[Q^*]^{s_c}M[Q^*]^{1-s_c}.
    \ee
    \end{proposition}
   
    \begin{proof}
   Assume that there exists a breather $u$. We first observe that 
   there exists a $t_0>0$ such that 
   $$\|\nabla_x u(t_0,\cdot)\|^{s_c}_{L^2}M[u]^{1-s_c}\neq 
   \|\nabla_xQ^*\|^{s_c}_{L^2}M[Q^*]^{1-s_c};$$
   otherwise, the inequality (3.6) in  \cite[Lemma 3.1]{Fang} would give us a contradiction with  condition \eqref{prop2-a}.  
   Therefore, in view of the  time translation invariance of the solutions, we need to analyze the following two cases:

   \medskip 
   \begin{align}
   & \|\nabla_x u(0,\cdot)\|^{s_c}_{L^2}M[u]^{1-s_c}<
   \|\nabla_xQ^*\|^{s_c}_{L^2}M[Q^*]^{1-s_c},\label{prop2-a1}\\
   & \nonumber\\
   &\|\nabla_x u(0,\cdot)\|^{s_c}_{L^2}M[u]^{1-s_c}>
    \|\nabla_x Q^*\|^{s_c}_{L^2}M[Q^*]^{1-s_c}.\label{prop2-a2} 
   \end{align}

\medskip 
From \cite[Th. 7.1]{HR}, conditions in  \eqref{prop2-a}  and \eqref{prop2-a1} imply that \be\label{cond1HR}\quad
\|\nabla_x u\|^{s_c}_{L^2} \|u\|^{1-s_c}_{L^2} \leq \|\nabla_x Q^{*}\|^{s_c}_{L^2} \|Q^{*}\|^{1-s_c}_{L^2}, \quad\forall t\in\R.
\ee

   \medskip
    On the other hand, using the following  Gagliardo-Nirenberg inequality (see for instance \cite[(6.7)]{LP} with $\alpha = p+1$)
    $$
    \|u\|_{L^{p+2}}\le K_{opt}
    \|\nabla_x u\|^{\frac{np}{2(p+2)}}_{L^2} 
    \|u\|^{1-\frac{np}{2(p+2)}}_{L^2}
    $$
   \noindent
   and the energy $E,$ \eqref{energy} we get that

    \be\label{cond2HR}\quad\|\nabla_x u\|_{L^2}^{2}\leq\frac{2}{p+2} K_{opt}^{p+2} \|\nabla_x u\|_{L^2}^{ps_c}\|u\|_{L^2}^{p(1-s_c)}\|\nabla_x u\|_{L ^2}^{2} + E[u], \ee

    \noindent
    with

    \be\label{cond3HR} \quad K_{opt}^{p+2}=\frac{ \|Q^*\|_{L^{p+2}}^{p+2}}{\|\nabla_x Q^*\|_{L^2}^{np/2}\|Q^*\|_{L ^2}^{2-\tfrac{(n-2)p}{2}}}.
    \ee
   \noindent 
   For \eqref{cond3HR} we refer (7.2) in \cite[p. 464]{HR} wit $p+1$ instead of $p$. Now, combining \eqref{cond1HR}, \eqref{cond2HR}, \eqref{cond3HR} and having in mind the value of $s_c$, we obtain

    \be\label{cond4HR} \begin{aligned}\|\nabla_x u\|_{L^2}^{2}&<\frac{2}{p+2} \frac{ \|Q^*\|_{L^{p+2}}^{p+2}\|\nabla_x Q^*\|_{L^2}^{ps_c}}{\|\nabla_x Q^*\|_{L^2}^{np/2}\|Q^*\|_{L ^2}^{2-\tfrac{(n-2)p}{2}}}  \|Q^*\|_{L^2}^{p(1-s_c)}\|\nabla_x u\|_{L ^2}^{2} + E[u]\\
    &\\
    &=\frac{2}{p+2}\frac{ \|Q^*\|_{L^{p+2}}^{p+2}}{\|\nabla_x Q^*\|_{L^2}^{2}} \|\nabla_x u\|_{L ^2}^{2} + E[u].
    \end{aligned}
    \ee

    \medskip
    On the other hand, the ground state $Q^*$ verifies
    \begin{align*}
     & \|\nabla_x Q^*\|_{L^2}^2 - \|Q^*\|_{L^{p+2}}^{p+2} = -(1-s_c)\|Q^*\|_{L^{2}}^{2}\\
     \intertext{and the Pohozaev's identity:}
     & (n-2)\|\nabla_x Q^*\|_{L^{2}}^{2} - \frac{2n}{p+2}\|Q^*\|_{L^{p+2}}^{p+2} = - n(1-s_c)\|Q^*\|_{L^{2}}^{2};
    \end{align*}

    \medskip
    \noindent
    so using these relations we find 
    
    \be\label{cond5HR}\|\nabla_x Q^*\|_{L^2}^{2}= \frac{np}{2(p+2)}\|Q^*\|_{L^{p+2}}^{p+2}.
    \ee
    \noindent
    Therefore, by using \eqref{cond5HR} in \eqref{cond4HR}, we get

    \[
    \begin{aligned}\|\nabla_x u\|_{L^2}^{2}<\frac{2}{p+2}\frac{2(p+2)}{np} \|\nabla_x u\|_{L ^2}^{2} + E[u],
    \end{aligned}
    \]
    \noindent
    namely

    \be\label{cond7HR} \|\nabla_x u\|_{L^2}^{2}<\frac{4}{np} \|\nabla_x u\|_{L ^2}^{2} + E[u],\quad\forall t\in\R.
    \ee
    \noindent
    By using now, \eqref{energy} in \eqref{cond7HR}, we arrive at

    \[
    \frac{2}{p+2}\|u\|_{L^{p+2}}^{p+2}<\frac{4}{np} \|\nabla_x u\|_{L ^2}^{2},
    \]
    \noindent
    or

    \be\label{cond9HR} 
    \frac{np}{p+2}\|u\|_{L^{p+2}}^{p+2}< 2 \|\nabla_x u\|_{L ^2}^{2}.
    \ee
    \noindent
    Finally, substituting \eqref{cond9HR} in \eqref{virial2}, we obtain that

    \[
     \widetilde{P}'[u](t)>0,\quad\forall t\in\R,
    \]
    \noindent
    which gives a contradiction with the existence of breather solutions.

\medskip

Finally, if we are under  conditions \eqref{prop2-a} and \eqref{prop2-a2}, since a breather with zero background naturally belong to the virial space $\big(|x|u\in L^2(\R^n)\big)$, we note that from \cite[Th. 7.1]{HR}, solutions blow up in finite time, and therefore these cannot be breathers.
\end{proof}

\begin{remark}
It is important to observe that in \cite[Th. 7.1]{HR},  blow up in finite time occurs under the hypotheses \eqref{prop2-a} and \eqref{prop2-a2}, assuming initial data belonging to the virial space $\big(|x|u_0\in L^2(\R^n)\big)$ or $u_0$ radial (when $p< \min\{p_n^*, 5\}$ with  $n>1$). Nevertheless, we do not include additional conditions in \eqref{prop2-a2} because a breather with zero background naturally belongs to the virial space in view of its exponential decay.
\end{remark}

\begin{table}[h!]
	\centering
	\begin{tabular}{|c|c|c|c|}\hline
	\multicolumn{4}{|c|}{\textbf{Nonexistence of breathers for mass super-critical:} $\boldsymbol{4/n<p<p_n^{*}}$} \\
	\hline
		$\boldsymbol{\varepsilon}$  & $\boldsymbol{P[u]}$ & $\boldsymbol{E[u]}$ & \textbf{Mass and Energy Balance}\\
	\hline
		 \cellcolor{gray!25} 1 & & & \\
		\hline
		 \cellcolor{gray!25} -1&  \cellcolor{gray!25} $\neq 0$&  & \\
	\hline
		 \cellcolor{gray!25} -1&  &  \cellcolor{gray!25} $\le 0$ &  \\
	\hline
		\cellcolor{gray!25} -1&  \cellcolor{gray!25} $= 0$ &  \cellcolor{gray!25}$> 0$ & \cellcolor{gray!25} 
		   $E[u]^{s_c}M[u]^{1-s_c}<E[Q^*]^{s_c}M[Q^*]^{1-s_c}$\\
	\hline
	\end{tabular}
	\caption{Summary results in the mass super-critical case. Each line represents a non-existence result with its respective conditions (for parameter or functional) marked in gray. The empty cells means that there are no restrictions for the respective parameter or functional.}
\end{table}

    \section{Breathers with nonzero background}

    In the forthcoming analysis it is suitable to introduce the change of variables $u(t,x)=e^{-i\varepsilon t}v(t,x)$ which reduces \eqref{nNLS} to

    \be\label{nvbcNLS}
    iv_t + \Delta v = \varepsilon(|v|^p-1)v,\quad\varepsilon=\pm1.
    \ee
    \noindent
    Note that \eqref{nvbcNLS} is usually known as a generalized Gross-Pitaevskii equation, when solutions behave asymptotically as constant 1 at $\infty$; namely
    $$\lim\limits_{|x|\to +\infty}|u(t,x)|=1,\quad\forall t\geq0.$$
    In particular, we consider here solutions in the space $1+H^1(\R)$.
    \noindent 
    In cases $p=2,4,$ equation \eqref{nvbcNLS} appears in various areas of Physics as nonlinear optics, Bose-Einstein condensates, fluid dynamics, superfluids. See \cite{GR}, \cite{JR},\cite{JPR},\cite{KR1} and \cite{KR2} for further reading.

    \medskip
    Conserved quantities for \eqref{nvbcNLS}, mass and energy and momentum are now re-defined as

   \[
    M_{nz}[v](t):=\int_{\R^n}(|v|^2-1)=M_{nz}[v](0),
    \]

    \medskip

    \be\label{energynz}
    E_{nz}[v](t):=\int_{\R^n}\left(|\nabla_x v|^2-\tfrac{2\varepsilon}{p+2}(1-|v|^{p+2})\right)=E_{nz}[v](0),
    \ee
    \noindent
    and
    \[
    P_{nz}[v](t):=\text{Im}\int_{\R^n}(\bar{v}-1)\nabla_x v=P_{nz}[v](0).
    \]
    \noindent
    Just by adapting the virial function in \eqref{virial}, and only dealing with even powers\footnote{Note that $|v|^2-1$ has e.g. physical meaning of density or intensity on a background in optics.} in nonlinearity (i.e.  $p=2q$), we obtain the following:

    \begin{lemma}\label{pnztilde}
    For any solution $v$ of \eqref{nvbcNLS}, if we define
    \be\label{virialnz}
   \widetilde{P}_{nz}[u](t):=\emph{Im}\int_{\R^n}\langle x,(\bar{v}-1)\nabla_x v\rangle dx,
    \ee
    \medskip
    \noindent
    then  with $q=p/2$, we have
    \be\label{dtvirialnz2}
  \widetilde{P}_{nz}'[u](t)=2E_{nz}[v] -\varepsilon n M_{nz}[v] + \varepsilon\big(\tfrac{4}{p+2}-n\big)\int_{\R^n}(1-|v|^{p+2})  - \varepsilon  n\sum_{k=1}^{q}\binom{q}{k}\int_{\R^n}\frac{(|v|^2-1)^{k+1}}{k+1}.
    \ee

    \end{lemma}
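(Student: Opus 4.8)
The plan is to read off $\widetilde{P}_{nz}'(t)$ from the standard NLS stress--energy (virial) computation, adapted to the nonzero background. Differentiating \eqref{virialnz} under the integral sign and inserting $v_t=i\Delta v-i\varepsilon(|v|^p-1)v$, the time derivative of the local density $\text{Im}(\bar v\,\partial_j v)$ splits, exactly as for \eqref{virial2}, into a kinetic piece and the divergence of a nonlinear ``pressure'': schematically $\partial_t\text{Im}(\bar v\,\partial_j v)=[\text{kinetic}]-\partial_j P(|v|^2)$, where $P(\rho)=\rho G'(\rho)-G(\rho)$ and $G'(\rho)=\varepsilon(\rho^{q}-1)$ encodes the nonlinearity. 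Pairing with the weight $x_j$ and integrating by parts, the choice $a(x)=\tfrac12|x|^2$ (so $\partial_j\partial_k a=\delta_{jk}$, $\Delta^2 a=0$, $\nabla\cdot x=n$) turns the kinetic piece into $2\|\nabla v\|_{L^2}^2$ and the pressure piece into $n\int_{\R^n}\big(P(|v|^2)-P(1)\big)$, the factor $n$ being the source of the $n$'s throughout the statement.

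The even-power hypothesis $p=2q$ is then used decisively to make the nonlinearity polynomial: writing $s:=|v|^2-1$ one has $|v|^p=(1+s)^q$, so $|v|^p-1=\sum_{k=1}^q\binom{q}{k}s^k$ and $G$ becomes a polynomial in $s$. Expanding $n\int(P(|v|^2)-P(1))$ accordingly and folding the kinetic term $2\|\nabla v\|_{L^2}^2$ into the energy \eqref{energynz}, I would reorganize the result into $2E_{nz}[v]$, the mass \eqref{massnz}, the term $\varepsilon(\tfrac{4}{p+2}-n)\int_{\R^n}(1-|v|^{p+2})$, and the binomial sum; the passage from the single power $\int(|v|^{p+2}-1)$ to the sum rests on the elementary identity $\tfrac{1}{q+1}\int_{\R^n}(|v|^{p+2}-1)=M_{nz}[v]+\sum_{k=1}^q\binom{q}{k}\int_{\R^n}\frac{s^{k+1}}{k+1}$, itself a consequence of $\tfrac{1}{q+1}\binom{q+1}{j}=\tfrac1j\binom{q}{j-1}$.

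The step I expect to be the main obstacle is the bookkeeping forced by the nonzero background. First, none of the intermediate integrals ($\int|v|^2$, $\int|v|^{p+2}$, and the pressure itself) converge on the background $|v|\to1$, so each must be regularized by subtracting its value at $|v|=1$ \emph{before} any integration by parts, and one must check that the boundary terms vanish --- this is exactly where the exponential localization of $v-1$ (Definition \ref{defbreather}(iii), here with $\mu=1$) enters. Second, and more delicate, is the ``$-1$'' carried in the weight $(\bar v-1)$ rather than $\bar v$: unlike the momentum \eqref{momentumnz}, where $\int_{\R^n}\nabla v=0$ makes the subtraction immaterial, here $\tfrac{d}{dt}\text{Im}\int_{\R^n}\langle x,\nabla v\rangle$ is genuinely nonzero (it equals $\varepsilon n\int_{\R^n}(|v|^p-1)\text{Re}(v)$ after using $\int_{\R^n}v_t$ and $\int_{\R^n}\Delta v=0$), so one must account for this correction and verify that, after the reorganization, it is consistent with the stated right-hand side; this reconciliation is the most error-prone point of the argument. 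I would pin down the final coefficients by specializing to the cubic case $q=1$ and testing against the explicit Gross--Pitaevskii breathers, which is the cleanest guard against a sign or normalization slip.
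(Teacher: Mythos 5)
Your plan is, at bottom, the same computation as the paper's: the appendix multiplies \eqref{nvbcNLS} by the multiplier $2r\partial_r(\bar v-1)$ and integrates, which is just the multiplier form of differentiating the virial and substituting the equation; the kinetic part contributes $(2-n)\|\nabla v\|_{L^2}^2$ from the stress tensor plus $n\|\nabla v\|_{L^2}^2$ from the zeroth--order piece, and the nonlinear part is handled by exactly your binomial expansion $|v|^p-1=\sum_{k\ge1}\binom{q}{k}(|v|^2-1)^k$ followed by $s^k\partial_j s=\partial_j\frac{s^{k+1}}{k+1}$. Your ``pressure'' packaging together with the identity $\frac{1}{q+1}\int(|v|^{p+2}-1)=M_{nz}[v]+\sum_{k=1}^q\binom{q}{k}\int\frac{s^{k+1}}{k+1}$ is a correct and equivalent reorganization of the paper's term III; I checked that it reproduces the coefficients $\varepsilon\big(\tfrac{4}{p+2}-n\big)$ and $-\varepsilon n$ in \eqref{dtvirialnz2}.

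The one step you leave open --- reconciling the correction coming from the ``$-1$'' in the weight $(\bar v-1)\nabla v$ --- is a genuine gap, and it does not resolve the way you hope. Carrying the computation through for $\widetilde P_{nz}$ exactly as defined in \eqref{virialnz} produces the additional term $-\varepsilon n\int_{\R^n}(|v|^p-1)\operatorname{Re}(v)\,dx$: the two weights differ, after an integration by parts, by $n\operatorname{Im}\int(v-1)$, whose time derivative is $-\varepsilon\int(|v|^p-1)\operatorname{Re}(v)$ times $n$, precisely the correction you computed. This term depends on $\operatorname{Re}(v)$ rather than on $|v|$ alone, so it cannot be absorbed into any term of the stated right--hand side and it does not cancel. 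What \eqref{dtvirialnz2} actually is, is the derivative of $\operatorname{Im}\int_{\R^n}\langle x,\bar v\nabla v\rangle\,dx$; in the paper's appendix the multiplier satisfies $r\partial_r(\bar v-1)=r\partial_r\bar v$, so the ``$-1$'' never enters the computation there and the resulting total--derivative term is identified with $\widetilde P_{nz}'$ without comment. Under the exponential localization of Definition \ref{defbreather}(iii) the functional $\operatorname{Im}\int\langle x,\bar v\nabla v\rangle$ is finite and periodic along a breather, so the application in Theorem \ref{Teo7} survives; but to prove the lemma as literally stated you must either drop the ``$-1$'' from \eqref{virialnz} or append $-\varepsilon n\int(|v|^p-1)\operatorname{Re}(v)$ to \eqref{dtvirialnz2}. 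Your proposed consistency check on the cubic ($q=1$) case is a good instinct and is exactly what would expose this discrepancy.
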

    \begin{proof}
    See Appendix \ref{AppenLemm} for a complete proof.
    \end{proof}

\medskip
For the well-posedness of \eqref{nvbcNLS} in space $1+H^1$,  see  \cite[Appendix A]{BS}. The well definition of \eqref{energynz}-\eqref{dtvirialnz2} follows from the standard arguments as in the NLS case.

Note that the fourth term in \eqref{dtvirialnz2} is sign alternating, but even so we can state  the next result for lower powers and dimensions:

    \begin{theorem}[Nonexistence regimes for  NLS breathers with NVBC]\label{Teo7}
    Breather solutions $v$ of \eqref{nvbcNLS} can not exist in any of the following regimes:
    \medskip
    \begin{enumerate}
    	\item Cubic case ($p=2,\,q=1$):
    	\medskip 
    	\begin{itemize}
    		\item[(i)] $n=1$ and\, $\varepsilon\! \cdot\! \big(E_{nz}[v] -  \frac{\varepsilon}{2}M_{nz}[v]\big) \le 0$.
    		\medskip 
    		\item[(ii)]  $n=2$  and\, $E_{nz}[v]\neq 0$.
    	\end{itemize}
    
    	\medskip 
    	\item  Quintic case ($p=4,\,q=2$): 
    	\medskip 
    	\begin{itemize}
    		\item[(i)] $n=1$ and\, 
            $E_{nz}[v]\neq  0$.
    		\medskip 
    		\item[(ii)]  $n=2$  and\, $\varepsilon\! \cdot\! \big(E_{nz}[v] + \varepsilon M_{nz}[v]\big) \ge 0$.
    	\end{itemize}
    \end{enumerate}
    \end{theorem}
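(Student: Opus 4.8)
The plan is to argue by contradiction, exactly in the spirit of Theorems \ref{Teo1} and \ref{Teo2b}. Suppose $v$ is a breather of \eqref{nvbcNLS} with period $\tfrac{2\pi}{\omega}$. By the exponential localization in Definition \ref{defbreather}, the weight $\langle x,(\bar v-1)\nabla v\rangle$ decays exponentially, so the virial $\widetilde{P}_{nz}(t)$ in \eqref{virialnz} is well defined and, by the periodicity of $v$, is itself a periodic $C^1$ function of $t$. A periodic function cannot be strictly monotone, so it suffices to show that, in each of the four regimes, the derivative $\widetilde{P}_{nz}'(t)$ computed in Lemma \ref{pnztilde} keeps a strict constant sign for all $t$; this is the contradiction that rules out the breather.

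The single idea needed to exploit \eqref{dtvirialnz2} is to set $w:=|v|^2-1$ (which decays exponentially) and expand $|v|^{p+2}=(1+w)^{q+1}$ by the binomial theorem. Then the third term becomes $\int_{\R^n}(1-|v|^{p+2})=-\sum_{j=1}^{q+1}\binom{q+1}{j}\int_{\R^n}w^{j}$, and the sign-alternating sum in \eqref{dtvirialnz2} is likewise a polynomial in the moments $\int_{\R^n}w^{j}$. Collecting the coefficient of each $\int_{\R^n}w^{j}$ and using $\tfrac{4}{p+2}=\tfrac{2}{q+1}$, I expect the $M_{nz}$-term, the third term and the sum to recombine into the compact identity
\[
\widetilde{P}_{nz}'(t)=2E_{nz}[v]-\varepsilon\,\frac{nq-2}{q+1}\int_{\R^n}(1-|v|^{p+2}),
\]
so that the entire effect of the troublesome sum is carried by the scalar factor $nq-2$.

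With this in hand I would split into the four cases. When $nq=2$ — precisely the cubic case in $n=2$ and the quintic case in $n=1$ — the second term vanishes identically, leaving $\widetilde{P}_{nz}'(t)\equiv 2E_{nz}[v]$, a nonzero constant under the stated hypotheses ($E_{nz}[v]\neq 0$, respectively $\varepsilon E_{nz}[v]\ge 0$), which is incompatible with periodicity. For the cubic case in $n=1$ I would instead keep the nonnegative remainder visible: using $\int_{\R}(1-|v|^4)=-2M_{nz}[v]-\int_{\R}w^2$ gives $\widetilde{P}_{nz}'=2\big(E_{nz}[v]-\tfrac{\varepsilon}{2}M_{nz}[v]\big)-\tfrac{\varepsilon}{2}\int_{\R}w^2$, whence $\varepsilon\widetilde{P}_{nz}'\le -\tfrac12\int_{\R}(|v|^2-1)^2<0$ under hypothesis (1)(i). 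For the quintic case in $n=2$ I would reorganize \eqref{dtvirialnz2} into $\varepsilon\widetilde{P}_{nz}'=2\big(\varepsilon E_{nz}[v]+M_{nz}[v]\big)+2\int_{\R^2}w^2\big(1+\tfrac13 w\big)$, so that hypothesis (2)(ii) forces $\varepsilon\widetilde{P}_{nz}'>0$.

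The main obstacle is exactly the sign-alternating sum flagged after Lemma \ref{pnztilde}; its presence is what confines the result to these low powers and dimensions. The crux of the argument is to recognize that after recombination the only surviving nonlinear integrals are manifestly nonnegative: $\int(|v|^2-1)^2\ge 0$ in the cubic case, and, in the quintic $n=2$ case, $w^2\big(1+\tfrac13 w\big)\ge 0$ pointwise because $w=|v|^2-1\ge -1>-3$ makes the factor $1+\tfrac13 w$ strictly positive. Once this nonnegativity is secured, the energy/mass sign hypotheses push $\widetilde{P}_{nz}'$ (or $\varepsilon\widetilde{P}_{nz}'$) to a strict constant sign — strict because a nontrivial $v$ makes the residual integral positive — contradicting periodicity. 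The only routine items left are justifying differentiation under the integral sign and the finiteness of all integrals (both immediate from the exponential decay), together with the elementary bookkeeping of the binomial coefficients.
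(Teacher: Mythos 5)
Your strategy is the same as the paper's: take Lemma \ref{pnztilde} as given, show that under each hypothesis $\widetilde{P}_{nz}'$ keeps one sign, and contradict the periodicity of $\widetilde{P}_{nz}$. What you add is the compact identity $\widetilde{P}_{nz}'(t)=2E_{nz}[v]-\varepsilon\,\tfrac{nq-2}{q+1}\int_{\R^n}(1-|v|^{p+2})$, and this is correct: writing $w=|v|^2-1$ and using $\tfrac{1}{k+1}\binom{q}{k}=\tfrac{1}{q+1}\binom{q+1}{k+1}$, the alternating sum in \eqref{dtvirialnz2} recombines with the $M_{nz}$ term and the $(1-|v|^{p+2})$ term exactly as you claim (equivalently, $\widetilde{P}_{nz}'=2\|\nabla v\|_{L^2}^2+\varepsilon\tfrac{np}{p+2}\int(|v|^{p+2}-1)$, the natural analogue of \eqref{virial2}). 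Your treatments of (1)(i), (1)(ii) and (2)(ii) then agree with the paper's case computations up to a harmless coefficient on $\int(|v|^2-1)^2$ in \eqref{dtvirialnz2-proof-04}, and the identity makes transparent why only $nq\le 2$ is tractable. This is a cleaner organization of the same argument, not a different one.

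There is, however, a genuine issue in case (2)(i) (quintic, $n=1$). Your identity gives $\widetilde{P}_{nz}'\equiv 2E_{nz}[v]$ exactly, with no residual term — and indeed, substituting $p=4$, $q=2$, $n=1$ into \eqref{dtvirialnz2}, the $w^2$ and $w^3$ contributions cancel completely, so the paper's \eqref{dtvirialnz2-proof-02}, which carries an extra $+\tfrac{\varepsilon}{2}\int_{\R}(|v|^2-1)^2$, does not actually follow from the Lemma. The consequence for you is that the hypothesis $\varepsilon\, E_{nz}[v]\ge 0$ only yields a contradiction when $E_{nz}[v]\neq 0$; in the admissible subcase $E_{nz}[v]=0$ you get $\widetilde{P}_{nz}'\equiv 0$ and no contradiction, so your claim that the constant is ``nonzero under the stated hypotheses'' is unjustified there and the case is not closed. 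A related, smaller point: in the cases where you only obtain a weak sign (e.g.\ $\varepsilon\widetilde{P}_{nz}'\le-\tfrac12\int(|v|^2-1)^2\le 0$), periodicity forces $\widetilde{P}_{nz}'\equiv 0$ and hence $|v|\equiv 1$; nontriviality of $z$ in Definition \ref{defbreather} does not by itself exclude $|v|\equiv 1$, so a short argument ruling out unimodular breathers (or an appeal to the decay hypotheses) is still needed to make the sign strict — the paper elides the same step.
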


   \medskip 
    \begin{remark}
     In the above result, and for the sake of simplicity we do not consider higher powers and dimensions as a consequence of the algebraic complexity of these cases. This fact does not rule out the applicability of the result to  higher power cases.
    \end{remark}

    \begin{proof}
    Again, noting that a monotonic function can not be periodic, when $p=2\, (q=1),$ it turns out that \eqref{dtvirialnz2} becomes
    \be\label{dtvirialnz2-proof-01}
    \widetilde{P}_{nz}'[u](t)=2E_{nz}[v] -\varepsilon n M_{nz}[v] + \varepsilon(1-n)\int_{\R^n}(1-|v|^4)  - \frac{\varepsilon n}{2}\int_{\R^n}(|v|^2-1)^2,
    \ee
    which implies that \eqref{dtvirialnz2-proof-01} does not change sign under conditions \textit{(1)}-(i) and  \textit{(1)}-(ii).

    \medskip
    Now, we consider  the case $p=4\, (q=2)$. When $n=1$, from \eqref{dtvirialnz2} we have 
    \be\label{dtvirialnz2-proof-02}
    \widetilde{P}_{nz}'[u](t)=2E_{nz}[v], 
    \ee
    and consequently   \eqref{dtvirialnz2-proof-02}  does not change sign with the assumptions in \textit{(2)}-(i). The two dimensional case $n=2$ is algebraically more involved 
    and \eqref{dtvirialnz2} can be rewritten in the following way
    \be\label{dtvirialnz2-proof-03}
    \begin{split}
    \widetilde{P}_{nz}'[u](t)&=2E_{nz}[v] -2\varepsilon M_{nz}[v] +  \frac{4\varepsilon}{3}\int_{\R^2}(|v|^6-1) -\frac{2\varepsilon}{3}\int_{\R^2}(|v|^2-1)^3 -2\varepsilon \int_{\R^2}(|v|^2-1)^2\\
    &=2E_{nz}[v] -2\varepsilon M_{nz}[v] +\frac{2\varepsilon}{3}\int_{\R^2}(|v|^2-1)(1+4|v|^2 + |v|^4)  -2\varepsilon \int_{\R^2}(|v|^2-1)^2.
    \end{split}
    \ee
    Then, using that 
    $$1+4|v|^2 + |v|^4 =6 + 4(|v|^2-1) + (|v|^2-1)(|v|^2+1)$$
    and substituting into \eqref{dtvirialnz2-proof-03} one gets
    \be\label{dtvirialnz2-proof-04}
    \widetilde{P}_{nz}'[u](t)=2E_{nz}[v] + 2\varepsilon M_{nz}[v] +\frac{2\varepsilon}{3}\int_{\R^2}(|v|^2-1)^2(|v|^2+1)  + \frac{2\varepsilon}{3} \int_{\R^2}(|v|^2-1)^2.
    \ee
    Hence, conditions in \textit{(2)}-(ii) imply that \eqref{dtvirialnz2-proof-04} preserves the sign. 
    \end{proof}

    \section{Other NLS models}
   We can analyze nonexistence conditions on breathers with zero background in other  closely related NLS models.  In particular, these non-existence conditions will be justified by denying the time periodicity of solutions without taking into account the specific structure of the ground states of each model, unlike what is stated in Theorem  \ref{Teo2c}. For this reason, and for the models to be considered, we give a broader definition of breathers that can include standing waves. Thus, each non-existence condition will also deny the existence of standing waves (weak breathers).
   
   \begin{definition}[{\it Weak breathers}]\label{defwbreather} We say that $u$ is a \emph{breather} solution of a specific NLS model, if there exist  $\tau,C,\gamma$ and $\alpha^*\geq0$ such that the following holds
   
   \begin{enumerate}[(i)]
   	\item {\it periodicity in time:} for all  $(t, x)\in \R\times \R^n$, we have $u(t+\tau,x)=u(t,x).$ 
   			
   	\medskip 
   	\item  {\it regularity in spatial variable:} for all $t\in\R$ we have
   	$u(t, \cdot )\in C^{\infty}(\R^n) \cap  H^\kappa(\R^n),$ with $\kappa$ the specific energy's regularity of each model.
      
   	\medskip 
   	\item {\it exponential localization:} 
   	$\sup\limits_{t\in \R}\big|D^{\alpha}_x u(t, \cdot)\big|\le Ce^{-\gamma|x|}$ for $|\alpha|\leq\alpha^*.$
    \end{enumerate}
   	\end{definition}

    \subsection{Cubic-quintic NLS equation}\label{sec35NLS}
    The NLS equation with  cubic and  quintic nonlinearities is written as
    \be\label{35NLS}
    iu_t + \Delta u  =  \lambda_1|u|^2u - \lambda_2|u|^4u,\quad (t,x)\in\R\times\R^n,\quad \lambda_1\cdot \lambda_2>0,~\lambda_1,\lambda_2\in\R.
    \ee

    \medskip
    \noindent
    Global well-posedness results for the Cauchy problem associated to \eqref{35NLS} in the energy space $H^1(\R^n)$, $n\leq3$, can be seen in \cite{Caze} and \cite{Zhang}. For this NLS model, mass and momentum are the same as in \eqref{mass}-\eqref{momentum}, but its energy is

    \[
    E_1[u](t):=\int_{\R^n}\left(|\nabla_x u|^2+\tfrac{\lambda_1}{2}|u|^4-\tfrac{\lambda_2}{3}|u|^6\right)dx=E_1[u](0).
    \]

    \medskip 
    Similar to the case of NLS  with zero background, we recall that we have 
    \begin{equation*}
    \frac{d}{dt}\int_{\R^n}  x|u|^2dx=\text{Im}\int_{\R^n}\nabla_xu(t,x)\bar{u}(t,x)dx=2P[u](t)=2P[u](0), 
    \end{equation*}
    so breather solutions must have null momentum ($P[u]=0$).    On the other hand, remembering the virial $\widetilde{P}[u](t)$ \eqref{virial}, we now get for \eqref{35NLS}

    \be\label{dtvirial35}
    \widetilde{P}'[u](t)=2 E_1[u] + \lambda_1\left(\tfrac{n}{2}-1\right)\|u(t,\cdot)\|^{4}_{L^{4}}-\tfrac{2}{3}\lambda_2(n-1)\|u(t,\cdot)\|^{6}_{L^{6}}.
    \ee

    \medskip 
    \noindent 
    From this derivative, we are able to obtain the following characterization of the nonexistence of breather solutions of \eqref{35NLS} (Definition \eqref{defwbreather} with $\kappa=1$):

    \begin{theorem}[Nonexistence in energy-subcritical dimensions]\label{Teo2}
    A solution $u$ of \eqref{35NLS} with $P[u]=0$ cannot be a breather in any of the following regimes:

    \medskip
    \begin{enumerate}
    \item $n=1$ and\, $\lambda_1\!\cdot \!E_1[u]\le 0.$

    \medskip
    \item $n=2$ and\, $\lambda_2\!\cdot \!E_1[u]\le 0.$
    \end{enumerate}
    \end{theorem}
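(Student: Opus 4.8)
The plan is to reuse the virial--periodicity mechanism already exploited in Theorems \ref{Teo1} and \ref{Teo2b}. Since a breather $u$ is by Definition \ref{defbreather} periodic in time, the weighted momentum $\widetilde{P}(t)$ from \eqref{virial} is itself a periodic function of $t$, and therefore its derivative $\widetilde{P}'(t)$ cannot maintain a \emph{strict} constant sign (a strictly monotone function is never periodic). Consequently the entire proof reduces to showing that, under the hypotheses of each regime, the right-hand side of the identity \eqref{dtvirial35} is everywhere strictly positive or everywhere strictly negative.

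First I would specialize \eqref{dtvirial35} to $n=1$. Here the quintic contribution disappears because its coefficient carries the factor $(n-1)$, leaving
\be
\widetilde{P}'(t) = 2E_1[u] - \tfrac{\lambda_1}{2}\|u(t,\cdot)\|^4_{L^4}.
\ee
I then split on the sign of $\lambda_1$. If $\lambda_1>0$, the hypothesis $\lambda_1\!\cdot\! E_1[u]\le 0$ forces $E_1[u]\le 0$, so both summands are nonpositive and the $L^4$ term is strictly negative for a nontrivial breather, giving $\widetilde{P}'(t)<0$ for all $t$. If $\lambda_1<0$, then $E_1[u]\ge 0$ while $-\tfrac{\lambda_1}{2}\|u\|^4_{L^4}>0$, so $\widetilde{P}'(t)>0$ for all $t$. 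In either case $\widetilde{P}$ is strictly monotone, contradicting periodicity and settling regime (1).

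The case $n=2$ is entirely symmetric, but now it is the cubic term that vanishes, since its coefficient $\lambda_1(\tfrac{n}{2}-1)$ is zero at $n=2$; what survives is
\be
\widetilde{P}'(t) = 2E_1[u] - \tfrac{2}{3}\lambda_2\|u(t,\cdot)\|^6_{L^6}.
\ee
The hypothesis $\lambda_2\!\cdot\! E_1[u]\le 0$ with $\lambda_2\ne 0$ fixes the sign exactly as before: for $\lambda_2>0$ one has $E_1[u]\le 0$ and hence $\widetilde{P}'(t)<0$, while for $\lambda_2<0$ one has $E_1[u]\ge 0$ and hence $\widetilde{P}'(t)>0$, again contradicting periodicity.

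The computation is routine; the single point requiring care, and the sole reason the hypotheses insist on $\lambda_i\ne 0$, is the \emph{strictness} of the inequality. Because the energy assumption is non-strict ($\le 0$), the strict sign of $\widetilde{P}'$ must come entirely from the surviving $L^4$ (respectively $L^6$) term. That term is strictly positive precisely because a breather is a nontrivial solution ($z\ne 0$ in Definition \ref{defbreather}, so $u$ cannot vanish identically and its $L^4$, $L^6$ norms are positive for every $t$). I would make this nonvanishing explicit, as it is exactly what upgrades ``monotone'' to ``strictly monotone'' and thereby excludes the periodicity demanded of a breather.
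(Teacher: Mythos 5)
Your proof is correct and follows exactly the paper's argument: specialize the virial identity \eqref{dtvirial35} to $n=1$ and $n=2$, observe that the hypotheses force $\widetilde{P}'(t)$ to have a strict constant sign, and conclude by contradiction with the periodicity of $\widetilde{P}$. The paper states this in one line; your only addition is to make explicit the nonvanishing of the $L^4$ (resp.\ $L^6$) norm that guarantees strict monotonicity, which is a worthwhile clarification but not a different method.
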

    \begin{proof}
    Again, noting that a monotonic function can not be periodic, from  \eqref{dtvirial35} we obtain the regimes {\it(1)-(2)}.
    \end{proof}

In the following result, we are able to extend this result to higher dimensions, e.g. $n=3$,

    \begin{theorem}[Nonexistence in energy critical dimension]\label{Teo3}
    	Let $n=3$. A solution $u$ of \eqref{35NLS} with $P[u]=0$ cannot be a breather in any of the following cases:
    	\begin{enumerate}
    		\item $\lambda_1<0$ and $E_1[u] > \frac{3\lambda_1^2}{128|\lambda_2|}M[u]$.
    		\medskip  
    		\item $\lambda_1>0$, $E_1[u]<0$ and $\widetilde{P}[u](0) >0$.
    	\end{enumerate}
    \end{theorem}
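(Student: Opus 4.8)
The plan is to feed the sign hypotheses into the virial identity \eqref{dtvirial35}, which in the present energy-critical dimension $n=3$ reads
\[
\widetilde{P}'(t)=2E_1[u]+\tfrac{\lambda_1}{2}\|u\|_{L^4}^4-\tfrac{4}{3}\lambda_2\|u\|_{L^6}^6,
\]
and to show that under each set of hypotheses $\widetilde{P}'$ keeps a strict sign for all $t$, which is incompatible with the breather being periodic (a continuous periodic function, being bounded, cannot have a derivative of constant nonzero sign). The condition $P[u]=0$ guarantees that $\widetilde{P}$ is the natural object, and the exponential localization in Definition \ref{defbreather} makes $\widetilde{P}(t)$ finite and $2\pi/\omega$-periodic.

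For case \textit{(1)}, since $\lambda_1\cdot\lambda_2>0$ and $\lambda_1<0$ we also have $\lambda_2<0$, so the identity becomes $\widetilde{P}'(t)=2E_1[u]-\tfrac{|\lambda_1|}{2}\|u\|_{L^4}^4+\tfrac{4}{3}|\lambda_2|\|u\|_{L^6}^6$, in which the only term of unfavourable sign is the quartic one. First I would control it by the interpolation (log-convexity of $L^p$ norms) $\|u\|_{L^4}^4\le\|u\|_{L^2}\,\|u\|_{L^6}^3$, and then absorb the resulting factor $\|u\|_{L^6}^3=(\|u\|_{L^6}^6)^{1/2}$ into the favourable sextic term by Young's inequality, choosing the Young parameter to match the coefficient $\tfrac{4}{3}|\lambda_2|$. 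Optimizing this absorption leaves a residual proportional to $\lambda_1^2\,M[u]/|\lambda_2|$; tracking the constants gives
\[
\widetilde{P}'(t)\ \ge\ 2E_1[u]-\frac{3\lambda_1^2}{64|\lambda_2|}\,M[u],
\]
so the stated threshold on $E_1[u]$ in \textit{(1)} forces $\widetilde{P}'(t)>0$ for every $t$, the desired contradiction.

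For case \textit{(2)} we have $\lambda_1>0$ and hence $\lambda_2>0$, together with $E_1[u]<0$. Now the quartic term has favourable sign but the sextic one does not, so instead of interpolating I would exploit the nonnegativity of the kinetic energy: from \eqref{E35nls}, $\|\nabla u\|_{L^2}^2=E_1[u]-\tfrac{\lambda_1}{2}\|u\|_{L^4}^4+\tfrac{\lambda_2}{3}\|u\|_{L^6}^6\ge0$ yields $\tfrac{\lambda_2}{3}\|u\|_{L^6}^6\ge -E_1[u]+\tfrac{\lambda_1}{2}\|u\|_{L^4}^4$. Substituting this lower bound for the sextic term into the virial identity and using $E_1[u]<0$ gives
\[
\widetilde{P}'(t)\ \le\ 6E_1[u]-\tfrac{3\lambda_1}{2}\|u\|_{L^4}^4\ <\ 0
\]
for all $t$, again contradicting periodicity. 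Equivalently, since $V''(t)=4\widetilde{P}'(t)\le 24E_1[u]<0$ for the variance \eqref{variance-NLS}, the hypothesis $\widetilde{P}(0)>0$ (i.e. $V'(0)>0$) may be invoked in a Glassey-type argument: a uniformly concave nonnegative $V$ must vanish in finite time, so the solution blows up and cannot be a global breather.

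The main obstacle I anticipate is case \textit{(1)}: unlike the sextic term, the quartic term does not pair with the energy through a Pohozaev balance in $n=3$, so its indefinite contribution must be tamed purely by interpolation and Young's inequality, and the delicate point is to choose the Young constant optimally so that the leftover mass term is dominated by the explicit threshold on $E_1[u]$ in the statement. Case \textit{(2)}, by contrast, is essentially immediate once the kinetic-energy positivity is used.
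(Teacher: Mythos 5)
Your case \textit{(1)} follows the paper's argument essentially verbatim: H\"older interpolation of the quartic term between $L^6$ and $L^2$, then Young's inequality with the parameter $\delta^2=\tfrac{3|\lambda_1|}{16|\lambda_2|}$ chosen to absorb the resulting sextic contribution into the favourable term $\tfrac{4|\lambda_2|}{3}\|u\|_{L^6}^6$. One discrepancy needs attention: the correct H\"older bound is $\|u\|_{L^4}^4\le\|u\|_{L^6}^3\|u\|_{L^2}=\|u\|_{L^6}^3M[u]^{1/2}$, which after Young leaves the residual $\tfrac{3\lambda_1^2}{64|\lambda_2|}M[u]$ appearing in your display (first power of the mass), whereas the theorem's threshold is stated with $M[u]^2$; the paper's own proof uses $\|u\|_{L^4}^4\le\|u\|_{L^6}^3M[u]$, which is consistent with the stated $M[u]^2$ but overstates H\"older by a factor of $\|u\|_{L^2}$. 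As written, your inference that the hypothesis $E_1[u]>\tfrac{3\lambda_1^2}{128|\lambda_2|}M[u]^2$ forces $\widetilde{P}'(t)>0$ does not follow from your own lower bound when $M[u]<1$; what your estimate actually proves is the statement with $M[u]$ in place of $M[u]^2$, and you should state the threshold that matches your computation. For case \textit{(2)} you take a genuinely different route: the paper disposes of this case by citing the finite-time blow-up theorem of Zhang for $n=3$, $\lambda_1>0$, $E_1[u]<0$, $\widetilde{P}(0)>0$, whereas you substitute the lower bound $\tfrac{\lambda_2}{3}\|u\|_{L^6}^6\ge\tfrac{\lambda_1}{2}\|u\|_{L^4}^4-E_1[u]$ (from $\|\nabla u\|_{L^2}^2\ge0$) into the virial identity to get $\widetilde{P}'(t)\le 6E_1[u]-\tfrac{3\lambda_1}{2}\|u\|_{L^4}^4<0$ for all $t$, contradicting periodicity directly. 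This computation is correct, elementary and self-contained, and it makes the hypothesis $\widetilde{P}(0)>0$ superfluous for precluding breathers (that hypothesis is only needed for the concavity/blow-up route you sketch as an alternative). In short: \textit{(1)} is the same proof modulo the mass-power bookkeeping, and \textit{(2)} is a cleaner argument that avoids any appeal to the blow-up literature and yields a slightly stronger conclusion.
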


    \begin{proof}
    If $n=3$ and $\lambda_1<0$ (i.e. $\lambda_2<0$), then from \eqref{dtvirial35} it follows that
    \be\label{proof-Teo3-01}
    \widetilde{P}'[u](t)=2 E_1[u] - \tfrac{|\lambda_1|}{2}\|u(t,\cdot)\|^4_{L^4}+\tfrac{4|\lambda_2|}{3}\|u(t,\cdot)\|^6_{L^6}.
    \ee
    Now, byCauchy-Schwarz and Young  inequalities we have  
    \be\label{proof-Teo3-02}
    \begin{split}
    \|u(t,\cdot)\|^4_{L^4}& \le \|u(t,\cdot)\|^3_{L^6}M[u]^{1/2}\\
    &\le \tfrac{1}{2\delta^2}\|u(t,\cdot)\|^6_{L^6} + \tfrac{\delta^2}{2}M[u],
    \end{split}
    \ee
    for any $\delta >0$. Thus, choosing $\delta^2 = \frac{3|\lambda_1|}{16|\lambda_2|}$ and using \eqref{proof-Teo3-02} in \eqref{proof-Teo3-01} one gets
   \[
    \widetilde{P}'[u](t) \ge 2 E_1[u] - \tfrac{3\lambda_1^2}{64|\lambda_2|}M[u],
   \]
    which under hypothesis {\it (1)} give us that $\widetilde{P}'[u](t)>0$ for all time, so $u$ cannot be a breather solution. 

    \medskip
    In case {\it(2)}, all solutions blow-up in finite time according to  \cite[Th.1.3, p.425]{Zhang}.
    \end{proof}

    \subsection{Biharmonic  NLS equation}\label{sec4thNLS}
    We also consider the  biharmonic NLS equation

    \be\label{4thNLS}
    iu_t + \mu\Delta u -\Delta^2u=  \varepsilon|u|^{p}u,\quad (t,x)\in\R\times\R^n,
    \ee
    \noindent
    where $\mu\in\R,~\varepsilon=\pm1$ and $0<p < p^{**}_n$, with
    $$
    p^{**}_n=
    \begin{cases}
     \infty& \text{if}\; n\le 4,\medskip \\
     \frac{8}{n-4}& \text{if}\; n\ge 5.
    \end{cases}
    $$

    This model physically allows to describe “small fourth-order dispersion” in the propagation of laser beams in mediums with Kerr nonlinearity \cite{Karp1}-\cite{Karp2}. About the Cauchy problem for \eqref{4thNLS}, see \cite{AKS} and  \cite{Paus}.

    \medskip
    For this higher order dispersive NLS model, mass and momentum are the same as in \eqref{mass}-\eqref{momentum}, but its energy is as follows

  \[
    E_2[u](t):=\int_\R\left(|\Delta u|^2+\mu|\nabla_x u|^2+\varepsilon\frac{2}{p+2}|u|^{p+2}\right)dx.
    \]

    \medskip
    For the virial $\widetilde{P}[u](t)$ \eqref{virial}, we now get for \eqref{4thNLS}

    \be\label{dtvirial4th}
    \begin{split}
    \widetilde{P}'[u](t)&= 4\|\Delta u(t,\cdot)\|^{2}_{L^{2}} +2\mu\|\nabla_x u(t,\cdot)\|^{2}_{L^{2}} + \varepsilon \frac{np}{p+2}\|u(t,\cdot)\|^{p+2}_{L^{p+2}} \\
    &=4E_2[u]-2\mu\|\nabla_x u(t,\cdot)\|^{2}_{L^{2}} + \varepsilon \left(\frac{np-8}{p+2}\right)\|u(t,\cdot)\|^{p+2}_{L^{p+2}}.
    \end{split}
    \ee

In fact, this expression can be seen as a generalization of the case $\mu=0$ in \cite{BL}. From \eqref{dtvirial4th}, we are able to obtain the following characterization of the nonexistence of breather solutions of \eqref{4thNLS} (Definition \eqref{defwbreather} with $\kappa=2$):

    \begin{theorem}[Nonexistence regimes for biharmonic NLS breathers]\label{Teo4}
    Breather solutions $u$ of \eqref{4thNLS} can not exist in any of the following regimes:
    
    \medskip
    \begin{enumerate}
    	
    \item  $\varepsilon=1$ (\textit{defocusing case}) and $\mu \geq0$.

     \medskip
     \item  $\varepsilon=-1$ (\textit{focusing case})
     \begin{enumerate}
         \item[(i)] $p<\frac{8}{n}<p_n^{**}:\quad \mu \leq0~\text{and}~ E_2[u]\geq0.$
         
         \item[(ii)]$\frac{8}{n}<p<p_n^{**}:\quad \mu \geq0~\text{and}~ E_2[u]\leq0.$
     \end{enumerate}

      \medskip
      \item  $\varepsilon=\pm1$ (\textit{def/focusing  cases})~
      \text{with}~~$p=\frac{8}{n},~ (E_2[u],\mu)\neq(0,0)~\text{and}~ \mu E_2[u]\leq0.$
    \end{enumerate}
    \end{theorem}

    \begin{proof}
    Again, noting that a monotonic function can not be periodic, from  \eqref{dtvirial4th} we obtain  regimes {\it(1)}, {\it(2)} and {\it(3)}. In fact, in regime {\it(1)} this is an immediate consequence of first line \eqref{dtvirial4th}. With respect to regimes {\it(2)} and {\it(3)}
    follow directly from second line \eqref{dtvirial4th}.
    \end{proof}

    \subsection{Derivative  NLS equation}\label{Section-CMDNLS}
    We also consider the  one dimensional derivative NLS equation
    \be\label{DNLS}
    iu_t + u_{xx} - i\varepsilon(|u|^2u)_x  = 0,\quad (t,x)\in\R\times\R\quad  (\varepsilon=\pm 1).
    \ee
    \noindent
    For this  dispersive NLS model, and as a completely integrable system, mass is the same as in \eqref{mass}.  Another  conserved quantity corresponds to  a kind of momentum (see (1.2) in \cite{Killip-Netekoume-Visan}), which is written as
 
   \[
    \begin{split}
    H[u](t) &:=\int_\R\left(\frac{i}{2}(\bar{u}u_x-u\bar{u}_{x})+\frac{\varepsilon}{2}|u|^4\right)dx\\
    	&= -\text{Im}\int_\R \bar{u}u_x +\frac{\varepsilon}{2}\int_\R|u|^4\\
        &= H[u](0).
    \end{split}
    \]
    
    \noindent
    Namely,
    \[
    -2H[u]=2\text{Im}\left( \int_\R\bar{u}u_x\right) -\varepsilon \|u\|^4_{L^4}.
    \]
    Finally there exists a kind of energy
    \be\label{Ednls02}
    E_3[u](t):=\int_\R\left(|u_x|^2+\frac{3i}{4}|u|^2(u\bar{u}_{x}-\bar{u}u_x)+\frac12|u|^6\right)dx = E_3[u](0).\\
    \ee
 
    Now, considering the following virial function:
    \[
    \widetilde{V}(t):=\int_\R x|u|^2,
    \]
    \noindent
    we get that
    \be\label{dtvirialDNLS}
    \begin{aligned}
    	\widetilde{V}'(t)&=\partial_t\int_\R x|u|^2=-\int_\R\partial_x\left(2\text{Im}(\bar{u}u_x) - \varepsilon\frac32\int_\R |u|^4\right)x dx\\
    	&=\int_\R\left(2\text{Im}(\bar{u}u_x) -\varepsilon \frac32\int_\R |u|^4\right) dx\\
    	&=-2H[u] - \frac12\varepsilon\|u\|^4_{L^4}.
    \end{aligned}\ee
    \noindent
    Using the expression above, we get the following result for breathers of \eqref{DNLS} (Definition \eqref{defwbreather} with $\kappa=1$):

    \begin{theorem}[Nonexistence regimes for derivative NLS breathers]\label{Teo5}
    	Breather solutions $u$ of \eqref{DNLS} can not exist in any of the following regimes:
    	\medskip
    	\begin{enumerate}
    		\item  $\varepsilon=-1$ (\it focusing case)  and  $H[u]\le 0$.
              
    		\medskip
    		\item   $\varepsilon=1$ (\it defocusing case)  and  $H[u]\ge 0$.
    	\end{enumerate}
    \end{theorem}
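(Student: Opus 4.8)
The plan is to run exactly the same argument that closes Theorems~\ref{Teo1}, \ref{Teo2} and \ref{Teo4}: use the already-derived expression \eqref{dtvirialDNLS} for the derivative of the first moment
\[
\widetilde{V}(t)=\int_\R x|u|^2,\qquad \widetilde{V}'(t)=-2H[u]-\tfrac12\varepsilon\|u(t,\cdot)\|_{L^4}^4,
\]
and show that under each set of hypotheses $\widetilde{V}'(t)$ keeps a \emph{strict} constant sign. Strict monotonicity of $\widetilde{V}$ is then incompatible with the time-periodicity forced by Definition~\ref{defbreather}, so no breather can exist.

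First I would record the two structural facts that make the sign reading rigorous. Since a breather is exponentially localized in space, $x|u(t,x)|^2$ is integrable for every $t$, so $\widetilde{V}(t)$ is well-defined; and since $u(t+2\pi/\omega,x)=u(t,x)$ gives $|u(t+2\pi/\omega,x)|^2=|u(t,x)|^2$ pointwise, integrating against $x$ yields $\widetilde{V}(t+2\pi/\omega)=\widetilde{V}(t)$, i.e. $\widetilde{V}$ is periodic. Next, a breather is a nontrivial solution, so by uniqueness $u(t,\cdot)\not\equiv 0$ for every $t$, whence $\|u(t,\cdot)\|_{L^4}^4>0$ for all $t$. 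This strict positivity is precisely what upgrades the conclusion from a nonstrict to a strict inequality below.

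The remaining step is to read off the sign of \eqref{dtvirialDNLS} regime by regime. In case \emph{(1)}, with $\varepsilon=-1$ and $H[u]\le 0$, the formula becomes $\widetilde{V}'(t)=-2H[u]+\tfrac12\|u(t,\cdot)\|_{L^4}^4>0$ for every $t$, so $\widetilde{V}$ is strictly increasing. In case \emph{(2)}, with $\varepsilon=1$ and $H[u]\ge 0$, it becomes $\widetilde{V}'(t)=-2H[u]-\tfrac12\|u(t,\cdot)\|_{L^4}^4<0$ for every $t$, so $\widetilde{V}$ is strictly decreasing. In either situation the strict monotonicity contradicts the periodicity established above, proving nonexistence.

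There is essentially no analytic obstacle beyond bookkeeping, since \eqref{dtvirialDNLS} has already been computed; the only point deserving a word of care is the legitimacy of the integration by parts that produced \eqref{dtvirialDNLS}, namely the vanishing of the boundary terms at $\pm\infty$. This is guaranteed by the exponential decay in Definition~\ref{defbreather}, which ensures that $x\,\partial_x(\bar u u_x)$ and the quartic density $|u|^4$ decay fast enough for all improper integrals and boundary contributions to be controlled. Once this is granted, the conservation of $H[u]$ via \eqref{Ednls} makes the two terms in $\widetilde{V}'(t)$ genuinely time-independent in sign, and the argument concludes as above.
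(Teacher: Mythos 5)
Your argument is correct and is essentially the paper's own proof: both read off the strict, constant sign of $\widetilde{V}'(t)=-2H[u]-\tfrac12\varepsilon\|u(t,\cdot)\|_{L^4}^4$ from \eqref{dtvirialDNLS} in each regime and contradict the periodicity of $\widetilde{V}$. Your additional remarks on integrability, strict positivity of $\|u(t,\cdot)\|_{L^4}$, and the boundary terms merely make explicit what the paper leaves implicit.
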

    \begin{proof}
    	Again, noting that a monotonic function can not be periodic, from  \eqref{dtvirialDNLS} we obtain the regimes {\it(1)} and {\it(2)}.
    \end{proof}

\begin{remark}
It is important to highlight that  breathers solutions cannot exist with even or odd spatial symmetry (real and imaginary part with the same parity) to \eqref{DNLS}. Indeed, 
more generally, if $u(t, x)$ is a regular solution  of \eqref{DNLS} such that 
$$u(t, x)=u(t,-x), \quad t\in \R,$$
then $u$ also satisfies the equation
\be\label{DNLS-Remark}
iu_t + u_{xx} + i\varepsilon(|u|^2u)_x  = 0,\quad (t,x)\in\R\times\R.
\ee
Thus, in view of  \eqref{DNLS} and \eqref{DNLS-Remark} we would have that $u$ is a solution of the linear equation 
$$iu_t + u_{xx} =0,$$
which is a contradiction in the context of nontrivial solutions. 
\end{remark}

  \subsection{Logarithmic NLS equation}\label{logNLS-section}

 Finally, we approach the log-NLS equation
    \be\label{logNLS}
    iu_t + \Delta u   =  \varepsilon\log(|u|^2)u,\quad (t,x)\in\R\times\R^n,\quad \varepsilon=\pm1.
    \ee
    \noindent
    This model was introduced in  wave mechanics and nonlinear optics \cite{BBM} (see also \cite{HeRe},\cite{KEB}).

    \medskip
     For \eqref{logNLS} mass and momentum are conserved and equal to \eqref{mass} and \eqref{momentum}. 
     Its energy is written as
    \be\label{Elognls}
    E_4[u](t) :=\int_\R\left(|u_x|^2+2\varepsilon|u|^2\log(|u|^2)\right)dx = E_4[u](0).
    \ee

     Note moreover that in \eqref{logNLS} the nonlinear $log$ term has a singularity at the origin, removing the local Lipschitz continuity.  
    \medskip
    The Cauchy problem for \eqref{logNLS} was fully described  in \cite[Th.1.1]{HO} (see also \cite{CG},\cite{GLN}): namely for initial data in a subspace 
    $$W^1(\R^n):= \{ v \in H^1 (\mathbb{R}^n), |v|^2 \ln{|v|^2} \in L^1 (\mathbb{R}^n) \}\subset H^1(\R^n),$$
     there exists a unique solution $u\in (C\cap L^\infty)(\R,W^1)$ in the case $\varepsilon=-1$, and $u\in C(\R,W^1)$ in the case $\varepsilon=1$.
 
   \medskip
   With respect to explicit solutions of \eqref{logNLS},    standing wave solutions $u(t,x)=e^{i\omega t}G_\omega(x)$ are known, where   
\[
G_\omega(x):=e^{\frac{\omega+n}{2}-\frac{\varepsilon|x|^2}{2}},\quad x\in\R^n,
   \]
   \noindent
   is the so called {\it Gausson}. Beside standing waves, breather solutions are also explicitly known (see \cite{Ferr20}, \cite{BBM}) in the case $\varepsilon=-1,n=1$, namely

     \begin{equation*}
        u^{\alpha} (t,x) := \sqrt{\frac{\alpha_r}{r_\alpha (t)}} \exp \Bigl[ \frac{1}{2} - i \Phi^\alpha (t) - \frac{x^2}{4 r_\alpha (t)^2} + i \frac{\dot{r}_\alpha (t)}{r_\alpha (t)} \frac{x^2}{4} \Bigr], \qquad t,x \in \mathbb{R},
    \end{equation*}
    with
    \begin{equation*}
        \Phi^\alpha (t) := \frac{1}{2} \int_0^t \frac{1}{r_\alpha (s)^2} \diff s + \varepsilon \int_0^t \ln \frac{r_\alpha (s)}{\alpha_0} \diff s - \varepsilon t,
    \end{equation*}
\noindent
and where for any $\alpha \in \mathbb{C}^+$, the function $r_\alpha$ is defined by
    \begin{equation*}
        \ddot{r}_\alpha = \frac{1}{r_\alpha^3} - \frac{2 \varepsilon}{r_\alpha}, \qquad r_\alpha (0) = \text{Re}~\alpha := \alpha_r, \qquad \dot{r}_\alpha (0) = \text{Im}~\alpha := \alpha_i.
    \end{equation*}
   
   \medskip
   \noindent
   In fact the above ODE  has a unique solution $r_{\alpha} \in \mathcal{C}^\infty (\mathbb{R})$ with values in $(0, \infty)$, and it is periodic.
   
   \medskip
   With respect to the virial $\widetilde{P}[u](t)$ \eqref{virial}, we now get for \eqref{logNLS} (see Appendix \ref{App2} for further details)

        \be\label{dtviriallog}
        \widetilde{P}'[u](t)=2\|\nabla_x u\|_{L^2}^2+\varepsilon n M[u].
        \ee

    Now, we characterize  the nonexistence of breather solutions for \eqref{logNLS} (Definition \eqref{defwbreather} with $\kappa=1$):

   \begin{theorem}[Nonexistence regimes for log-NLS breathers]\label{Teo6}
    Let $\varepsilon = 1.$ Then breather solutions can not exist in \eqref{logNLS}.   
    \end{theorem}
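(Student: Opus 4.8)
The plan is to run the same virial–monotonicity argument used in Theorem \ref{Teo1}, exploiting the special algebraic feature of the logarithmic nonlinearity: its Pohozaev pairing collapses to a multiple of the conserved mass. Suppose, towards a contradiction, that $u$ is a breather solution of \eqref{logNLS} with $\varepsilon=1$. By Definition \ref{defbreather}, $u$ is periodic in time with period $2\pi/\omega$ and exponentially localized in space, so the weighted-momentum functional $\widetilde{P}(t)$ from \eqref{virial} is finite and shares the time-periodicity of $u$; in particular $\widetilde{P}$ cannot be strictly monotone in $t$.

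I would then invoke the virial identity \eqref{dtviriallog}, namely $\widetilde{P}'(t)=2\|\nabla u\|_{L^2}^2+\varepsilon n\,M[u]$. For $\varepsilon=1$ this reads $\widetilde{P}'(t)=2\|\nabla u\|_{L^2}^2+n\,M[u]$, a sum of two nonnegative quantities. Because a breather is by definition a nontrivial solution, its mass $M[u]=\|u\|_{L^2}^2$ is strictly positive and, being conserved, stays positive for all $t$; hence $\widetilde{P}'(t)\ge n\,M[u]>0$ for every $t\in\R$. Thus $\widetilde{P}$ is strictly increasing and cannot be periodic, which is the desired contradiction. As in the defocusing power case of Theorem \ref{Teo1}, no balance condition relating energy, mass, or the nonlinearity power is needed: for $\varepsilon=1$ the logarithmic term acts repulsively and the virial derivative has a fixed sign unconditionally.

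The only step carrying genuine analytic content---rather than being a formality---is the justification of \eqref{dtviriallog}, since the map $u\mapsto\log(|u|^2)u$ is not locally Lipschitz at the origin, as noted before \eqref{logNLS}. Here I would rely on the well-posedness theory in the energy subspace $W^1(\R^n)$ recalled just after \eqref{logNLS}, together with the exponential localization from Definition \ref{defbreather}, to ensure $\widetilde{P}$ is absolutely continuous with the stated derivative. The mechanism producing the clean right-hand side is that, writing $g(\rho):=\varepsilon\,\rho(\log\rho-1)$ so that $g'(|u|^2)u=\varepsilon\log(|u|^2)u$ reproduces the nonlinearity, the nonlinear virial contribution is the Pohozaev pairing $n\int_{\R^n}\big(g'(|u|^2)|u|^2-g(|u|^2)\big)$, and the logarithms cancel exactly: $g'(\rho)\rho-g(\rho)=\varepsilon\rho$. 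Integrating gives $\varepsilon n\int_{\R^n}|u|^2=\varepsilon n\,M[u]$, which is why all dependence on the logarithmic profile disappears and the sign of $\widetilde{P}'$ is forced. Once \eqref{dtviriallog} is granted, the conclusion is immediate.
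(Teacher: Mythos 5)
Your proposal is correct and follows essentially the same route as the paper: the periodicity of $\widetilde{P}$ for a breather contradicts the strict positivity of $\widetilde{P}'(t)=2\|\nabla u\|_{L^2}^2+n\,M[u]$ when $\varepsilon=1$. Your additional justification of \eqref{dtviriallog} via the cancellation $g'(\rho)\rho-g(\rho)=\varepsilon\rho$ in the Pohozaev pairing is accurate and merely makes explicit an identity the paper states without proof.
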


   \begin{proof}
    Assuming $\varepsilon=1$, if $u$ is a breather solution of \eqref{logNLS},  $\widetilde{P}[u]$ is a periodic function. But from  \eqref{dtviriallog}, we have that 
     $\widetilde{P}'[u](t)>0,~~\forall t\in\R,$ and we conclude. 
   \end{proof}

\begin{remark}\label{remLog1}
 Note that Theorem \ref{Teo6}  agrees with \cite{CG}, confirming   that there can be no breather solutions in the defocusing case for \eqref{logNLS}.
\end{remark}

\begin{remark}(Inhomogeneous models)\label{Inhom}
The same approach already used to get regimes of nonexistence of breathers for \eqref{nNLS} can be applied to other closely related NLS models. For instance,  that
is the case of the  inhomogeneous Schr\"odinger equation (see \cite{Far}),  written as

   \[
     iu_t + \Delta u + \frac{1}{|x|^b}|u|^{2\sigma}u  = 0,\quad (t,x)\in[0,\infty)\times\R^n.
    \]

\noindent   
And even more, another interesting NLS model, is
  \[
     iu_t + \Delta u + k(x)|u|^{2\sigma}u  = 0,\quad (t,x)\in[0,\infty)\times\R^2,
     \]
   \noindent
   for some smooth bounded inhomogeneity $k:\R^2\rightarrow\R^{*}_+$, see \cite{RS} for further details. 
\end{remark}


   \appendix
   
   \section{Proof of Lemma \ref{pnztilde}}\label{AppenLemm}
    Considering \eqref{nvbcNLS}, we will multiply it by $2r\partial_r(\bar{v}-1),$ integrate in $x$ and take real part:
    \be\label{appennvbcNLS}
    \begin{aligned}
    &Re\left(2i\int_{\R^n}r\partial_r(\bar{v}-1)v_t\right)=\underbrace{i\int_{\R^n}\left(r\partial_r(\bar{v}-1)v_t - r\partial_r(v-1)\bar{v}_t \right)}_{I}\\
    &=-\underbrace{2Re \int_{\R^n}r\partial_r(\bar{v}-1)\Delta v}_{II} - \underbrace{2\varepsilon Re\int_{\R^n}r\partial_r(\bar{v}-1)(1-|v|^p)v}_{III}.
    \end{aligned}
    \ee
    \noindent
    We will analyse each of the three terms I-II-III in \eqref{appennvbcNLS}. For the first term, we have\footnote{We will consider here the notation $r\partial_r\equiv x_j\partial_{x_j}$ and we will also use the Einstein summation convention.}
    \[
    \begin{aligned}
    &\text{I}=i\int_{\R^n}\left(r\partial_r(\bar{v}-1)v_t - r\partial_r(v-1)\bar{v}_t \right)=
    i\int_{\R^n}\left(x_j\partial_{x_j}(\bar{v}-1)v_t - x_j\partial_{x_j}(v-1)\bar{v}_t \right)\\
    &=i\int_{\R^n}x_j\left(\bar{v}_{x_j}v_t - v_{x_j}\bar{v}_t \right) =
    i\int_{\R^n}x_j\left(\bar{v}_{x_j}v_t + \partial_t\bar{v}_{x_j}v -\partial_t\bar{v}_{x_j}v  - \bar{v}_tv_{x_j} \right)\\
    &=i\int_{\R^n}x_j\left(\partial_{t}(\bar{v}_{x_j}v) - \partial_{x_j}(\bar{v}_tv) \right) = 
    i\frac{d}{dt}\int_{\R^n}rv\partial_{r}\bar{v} - i\int_{\R^n}x_j\partial_{x_j}(\bar{v}_tv) \\
    &=i\frac{d}{dt}\int_{\R^n}rv\partial_{r}\bar{v} + n\int_{\R^n}i\bar{v}_tv\\
    &=i\frac{d}{dt}\int_{\R^n}rv\partial_{r}\bar{v} + n\int_{\R^n}\left(\Delta \bar{v}+\varepsilon(1-|v|^p)\bar{v}\right)v\\
    &=i\frac{d}{dt}\int_{\R^n}rv\partial_{r}\bar{v} - n\int_{\R^n}|\nabla_x v|^2 +\varepsilon n\int_{\R^n}(1-|v|^p)|v|^2.
    \end{aligned}
    \]	
    \noindent
    For the second term, we get
    \[
    \begin{aligned}
    &\text{II}=2Re \int_{\R^n}r\partial_r(\bar{v}-1)\Delta v = 2Re \int_{\R^n}x_j\partial_{x_j}(\bar{v}-1)\partial_{x_k}^2 v
    =(n-2) \int_{\R^n}|\nabla_x v|^2.
    \end{aligned}
    \]	
    \noindent
    Finally, for the third term,
    \[
    \begin{aligned}
    &\text{III}=2\varepsilon Re\int_{\R^n}r\partial_r(\bar{v}-1)(1-|v|^p)v\\
    &= 2\varepsilon Re\int_{\R^n}x_j\partial_{x_j}(\bar{v}-1)(1-|v|^p)v = 2\varepsilon Re\int_{\R^n}x_j\bar{v}_{x_j}(1-|v|^p)v \\
    & = \varepsilon \int_{\R^n}x_j(1-|v|^p)(\bar{v}_{x_j}v+v_{x_j}\bar{v})  =  \varepsilon \int_{\R^n}x_j(1-|v|^p)\partial_{x_j}(|v|^2)\\
    &=\varepsilon \int_{\R^n}x_j(1-(|v|^2)^{q})\partial_{x_j}(|v|^2)\big|_{q=p/2} = 
    \varepsilon \int_{\R^n}x_j(1-(|v|^2-1+1)^{q})\partial_{x_j}(|v|^2)\\
    &=\varepsilon \int_{\R^n}x_j(1-\sum_{k=0}^{q}c_k(|v|^2-1)^{k})\partial_{x_j}(|v|^2) = 
    -\varepsilon \int_{\R^n}x_j(\sum_{k=1}^{q}c_k(|v|^2-1)^{k})\partial_{x_j}(|v|^2)\\
    &=-\varepsilon \int_{\R^n}x_j\sum_{k=1}^{q}c_k\partial_{x_j}\left(\frac{(|v|^2-1)^{k+1}}{k+1}\right) =
    \varepsilon  n\int_{\R^n}\sum_{k=1}^{q}c_k\frac{(|v|^2-1)^{k+1}}{k+1}\\
    &=\varepsilon  n\sum_{k=1}^{q}c_k\int_{\R^n}\frac{(|v|^2-1)^{k+1}}{k+1},\quad c_k=\begin{pmatrix}q\\k\end{pmatrix}.
    \end{aligned}
    \]	

    \medskip
    Gathering I, II and III, and writing down \eqref{appennvbcNLS} as
    \[
     \text{I} = -\text{II} - \text{III},
    \]
    \noindent
    we get
    \[
     \begin{aligned}
      &i\frac{d}{dt}\int_{\R^n}rv\partial_{r}\bar{v} - n\int_{\R^n}|\nabla_x v|^2 +\varepsilon n\int_{\R^n}(1-|v|^p)|v|^2\\
      &=-(n-2) \int_{\R^n}|\nabla_x v|^2 - \varepsilon  n\sum_{k=1}^{q}c_k\int_{\R^n}\frac{(|v|^2-1)^{k+1}}{k+1},
     \end{aligned}
    \]
    \noindent
    and therefore
    \[
     \begin{aligned}
      &\widetilde{P}_{nz}'[u](t)=  (n-n+2)\int_{\R^n}|\nabla_x v|^2 -\varepsilon n\int_{\R^n}(1-|v|^p)|v|^2- \varepsilon  n\sum_{k=1}^{q}c_k\int_{\R^n}\frac{(|v|^2-1)^{k+1}}{k+1},
     \end{aligned}
    \]
    \noindent
    and from \eqref{energynz} $E_{nz}[v]=\int_{\R^n}|\nabla_x v|^2-\frac{2\varepsilon}{p+2}(1-|v|^{p+2})$, we get

    \[
     \begin{aligned}
      &\widetilde{P}_{nz}'[u](t)=  2E_{nz}[v] + \varepsilon\left(\frac{4}{p+2}\right)\int_{\R^n}(1-|v|^{p+2}) -\varepsilon n\int_{\R^n}(1-|v|^p)|v|^2 - \varepsilon  n\sum_{k=1}^{q}c_k\int_{\R^n}\frac{(|v|^2-1)^{k+1}}{k+1}\\
      &=2E_{nz}[v] + \varepsilon\left(\frac{4}{p+2}\right)\int_{\R^n}(1-|v|^{p+2}) -\varepsilon n\int_{\R^n}(|v|^2-|v|^{p+2}) - \varepsilon  n\sum_{k=1}^{q}c_k\int_{\R^n}\frac{(|v|^2-1)^{k+1}}{k+1}\\
      &=2E_{nz}[v] + \varepsilon\left(\frac{4}{p+2}\right)\int_{\R^n}(1-|v|^{p+2}) -\varepsilon n\int_{\R^n}(|v|^2-1+1-|v|^{p+2}) - \varepsilon  n\sum_{k=1}^{q}c_k\int_{\R^n}\frac{(|v|^2-1)^{k+1}}{k+1}\\
      &=2E_{nz}[v] -\varepsilon n M_{nz}[v] + \varepsilon\left(\frac{4}{p+2}-n\right)\int_{\R^n}(1-|v|^{p+2})  - \varepsilon  n\sum_{k=1}^{q}c_k\int_{\R^n}\frac{(|v|^2-1)^{k+1}}{k+1}.
     \end{aligned}
    \]

\section{Proof of \eqref{dtviriallog}}\label{App2}

Following the same steps that for Lemma \ref{pnztilde}, we will multiply \eqref{logNLS} by $2r\partial_r\bar{u},$ integrate in $x$ and take real part:
    \be\label{appenPlog}
\begin{aligned}
&Re\left(2i\int_{\R^n}r\partial_r\bar{u}u_t\right)=\underbrace{i\int_{\R^n}\left(r\partial_r\bar{u}u_t - r\partial_ru\bar{u}_t \right)}_{I}\\
    &=-\underbrace{2Re \int_{\R^n}r\partial_r\bar{u}\Delta u}_{II} + \underbrace{2\varepsilon Re\int_{\R^n}r\partial_r\bar{u}\log(|u|^2)u}_{III}.
    \end{aligned}
     \ee

For the first term, we have
    \[
    \begin{aligned}
    &\text{I}=i\int_{\R^n}\left(r\partial_r\bar{u}u_t - r\partial_ru\bar{u}_t \right)
    =i\frac{d}{dt}\int_{\R^n}ru\partial_{r}\bar{u} - n\int_{\R^n}|\nabla_x u|^2 -\varepsilon n\int_{\R^n}\log(|u|^2)|u|^2.
    \end{aligned}
    \]	
    \noindent
    For the second term, we get
    \[
    \begin{aligned}
    &\text{II}
    =(n-2) \int_{\R^n}|\nabla_x u|^2.
    \end{aligned}
    \]	
    \noindent
    And for the third term,
    \[
    \begin{aligned}
    &\text{III}=2\varepsilon Re\int_{\R^n}r\partial_r\bar{u}\log(|u|^2)u
    =  n\varepsilon\int_{\R^n}|u|^2 - \varepsilon n\int_{\R^n}\log(|u|^2)|u|^2.
    \end{aligned}
    \]

\medskip
Now,  collecting I, II and III above and writing down \eqref{appenPlog} as
    \[
     \text{I} = -\text{II} + \text{III},
    \]
    \noindent
    we get

    \[\widetilde{P}'[u](t)=2\|\nabla_x u\|_{L^2}^2+\varepsilon n M[u].\]

   \vspace{1.5cm}

    \end{document}